\date{\today}
\numberwithin{equation}{section}
\numberwithin{equation}{section}
\theoremstyle{plain}
\newtheorem{theorem}{Theorem}[section]
\newtheorem{proposition}[theorem]{Proposition}
\newtheorem{corollary}[theorem]{Corollary}
\newtheorem{fact}[theorem]{Fact}
\theoremstyle{definition}
\newtheorem{definition}[theorem]{Definition}
\newtheorem{example}[theorem]{Example}
\newtheorem{remark}[theorem]{Remark}
 \newtheorem*{acknowledgements}{Acknowledgements}
\newcommand{\inner}[2]{\left\langle{#1},{#2}\right\rangle}
\newcommand{\R}{\boldsymbol{R}}
\newcommand{\D}{\boldsymbol{D}}
\newcommand{\Z}{\boldsymbol{Z}}
\newcommand{\rt}{\sqrt}
\newcommand{\fr}{\frac}
\renewcommand{\H}{\boldsymbol{H}}
\renewcommand{\Sigma}{\varSigma}
\newcommand\Res{\operatorname{Res}}
\renewcommand\Re{\operatorname{Re}}
\renewcommand\Im{\operatorname{Im}}
\newcommand\ord{\operatorname{ord}}
\newcommand{\C}{\boldsymbol{C}}
\renewcommand\tilde{\widetilde}
\renewcommand\epsilon{\varepsilon}
\newcommand{\BSigma}{\bbar{\Sigma}}
\newcommand{\bbar}{\overline}
\newcommand{\pr}{\prime}
\newcommand{\al}{\alpha}
\newcommand{\bt}{\beta}
\newcommand{\gm}{\gamma}
\newcommand{\ddd}{\cdots}
\newcommand{\zb}{\bar{z}}
\newcommand{\col}{\colon}
\newcommand{\Chat}{\widehat{\C}}
\newcommand{\tr}{\textcolor{black}}
\newcommand{\tg}{\textcolor{black}}
\newcommand{\too}{\textcolor{black}}
\newcommand{\trr}{\textcolor{black}}
\title[The classification of improper affine fronts]
{The classification of complete improper affine spheres with singularities of low total curvature and new examples}
\author{
Jun Matsumoto
}
\keywords{Improper affine sphere, Singularity, Complete, Total curvature}
\subjclass[2020]{Primary 53A15; Secondary 53A35}
\address{
Department of Mathematics, \endgraf
\too{Institute of Science Tokyo}, \endgraf
O-okayama, Meguro, Tokyo, 152-8551, Japan
}
\email{\too{matsumoto.j.d273@m.isct.ac.jp}}
\begin{document}
\maketitle

\begin{abstract}
We provide a classification of complete improper affine spheres with singularities (say \emph{improper affine fronts}) in unimodular affine three-space $\R^3$ whose total curvature is greater than or equal to $-6\pi$, and a partial classification in the case of total curvature $-8\pi$.
For the case of total curvature $-8\pi$, we give a complete classification for genus $0$ case and show the existence of an example and a one parameter family with genus $1$.
We also study the asymptotic behavior of embedded ends of complete improper affine fronts. Moreover, we give new examples for this class of surfaces, including one which satisfies the equality condition of an Osserman-type inequality and is of positive genus.
\end{abstract}

\section{Introduction}

A locally strongly convex \emph{improper affine sphere} is a surface in unimodular affine $3$-space $\R^3$ whose affine Blaschke normal vector field is parallel and affine metric is definite (see Section \ref{preliminary}). It is locally obtained as the graph of a smooth function $\varphi(x,y)$ on a planner domain satisfying the elliptic Monge--Amp\`{e}re equation
\begin{equation}\label{hess1}
\varphi_{xx}\varphi_{yy}-\varphi_{xy}^2
=1.
\end{equation}
For such surfaces, Ferrer, Mart\'{i}nez, and Mil\'{a}n established a Weierstrass-type representation formula as follows (\cite{FMM2}, \cite{FeMaMi}) (see Fact \ref{complex_representation} for more precise statement):
\begin{equation}\label{sphererep}
\psi \coloneqq \left( \bbar{F}+G,\fr{1}{2}(|G|^2 - |F|^2) + \Re\left(GF - 2\int FdG\right)\right)
\col \Sigma\to\C\times\R=\R^3,
\end{equation}
where a pair $(F,G)$ of holomorphic functions  on a Riemann surface $\Sigma$
is called \emph{Weierstrass data}.
However, as a global property, a Bernstein-type theorem for ``complete'' improper affine spheres is well known (\cite{Calabi}, \cite{Calabi2}, \cite{TW}, \cite{KN}, \cite{Kawa20}, \cite{KK_24}). That is to say, any locally strongly convex affine complete (i.e., the affine metric is definite and complete) improper affine sphere is the \emph{elliptic paraboloid} (Example \ref{paraboloid}). Thus, Mart\'{i}nez \cite{IA-map} introduced a concept of an {\it improper affine map} (Definition \ref{IA-mapdef}) (referred to as an \emph{improper affine front} in this paper), which is defined by the same representation formula \eqref{sphererep} and admits a certain kind of singularities.
He also investigated a correlation between improper affine fronts and flat fronts in hyperbolic 3-space (\cite{Galvez}, \cite{KUY1}, \cite{krsuy}) in \cite{Mart} and \cite{MM_14}. In addition, Mart\'{i}nez, Mil\'{a}n, and Tenenblat \cite{MMT} gave a new method to transform improper affine fronts by applying the theory of Ribaucour transformations.

On the other hand, Mart\'{i}nez \cite{IA-map} introduced a completeness for improper affine fronts (Definition \ref{compdef}) like other classes of surfaces with singularities (e.g., the flat fronts in hyperbolic 3-space (\cite{KUY1}), maximal surfaces in Lorentz--Minkowski 3-space (\cite{UY06_maximal}), constant mean curvature $1$ surfaces in de Sitter 3-space (\cite{Fujimori_06})) and showed that the theory of complete improper affine fronts shares numerous global properties with the theory of complete minimal surfaces in Euclidian 3-space. As one of them, he proved a Huber--Osserman-type theorem (\cite{Huber_57}, \cite{Os}). Namely, the Riemann surface $\Sigma$ is biholomorphic to a compact Riemann surface $\BSigma$ minus finite points\trr{,} and the Weierstrass data
$(F,G)$ can be extended meromorphically to $\BSigma$ (Fact \ref{Huber2}). Moreover, he proposed a total curvature for complete improper affine fronts with respect to a certain complete Riemannian metric expressed in terms of \tr{their} Weierstrass data and showed an Osserman-type inequality
\begin{equation}
-\fr{1}{2\pi}\ \text{TC}(\Sigma)\geq -\chi(\BSigma)+2(\text{number of ends}),
\end{equation}
where TC($\Sigma$) denotes the total curvature and $\chi(\BSigma)$ is the Euler \tr{characteristic} of $\BSigma$
(Fact \ref{property}). 
In this paper, we study the following two topics for complete \trr{improper} affine fronts related to \tr{the results for minimal surfaces}.

Firstly, we describe an \trr{asymptotic} behavior of 
\tg{embedded ends} of \tg{complete} improper affine fronts in Section \ref{asymptoticsection}.
Schoen \cite{Schoen} proved that 
\tg{embedded ends} of \tg{complete} minimal surfaces in Euclidian 3-space with finite total curvature is asymptotic to either the plane or the catenoid.
\trr{Also}, Jorge and Meeks \cite{jorge} showed a relation between \trr{the} embeddedness of ends and the equality of the Osserman inequality and constructed the surface with high symmetry that attains the equality condition of the inequality.
As affine correspondence to these results, we define a concept \tr{of ``asymptoticity''} for embedded ends of complete improper affine fronts and classify asymptotic \trr{classes} of embedded ends into three types (Theorem \ref{asymptotic}). It is associated with an equality condition of the Osserman-type inequality (Corollary \ref{equality-condition}).
\trr{Moreover,} we \trr{construct} new examples  (Examples \ref{46noid}, and \ref{4noidiikanji}) with embedded ends, which satisfy the equality condition of the Osserman-type inequality.

Secondly, we study a classification of complete improper affine fronts in terms of the total curvature in Section \ref{classificationsection}. Complete orientable minimal surfaces in Euclidian 3-space of low total curvature were classified by Osserman \cite{Os} and L\'{o}pez \cite{Lopez}.  
So, this leads a natural problem ``to classify complete improper affine fronts with low total curvature''. The total curvature of improper affine fronts is $-2m\pi$, where $m$ is the mapping degree of a certain holomorphic map called the \emph{Lagrangian Gauss map} (Section \ref{preliminary}). Here, we classify the surfaces of the total curvature greater than or equal to $-8\pi$. Namely, the main result of this paper is the following:
\begin{theorem}\label{main}
\begin{itemize}
\item
Complete improper affine fronts in $\R^3$ whose total curvature is greater than or equal to $-6\pi$ are all genus $0$ and constructed \tr{by} the Weierstrass data as in Theorems \ref{tc0}, \ref{tc2}, \ref{tc4}, \ref{tc6}.
\item
Genus of complete improper affine fronts with the total curvature $-8\pi$ is less than or equal to $1$.

\item
Complete improper affine fronts in $\R^3$ with the total curvature $-8\pi$ and \underline{genus $0$} are the surfaces described in Theorem \ref{tc80}.
\item
There exists a complete improper affine front in $\R^3$ with the total curvature $-8\pi$ and \underline{genus $1$}.
In particular, it is the one with the maximum total curvature and positive genus (Proposition \ref{torus8pi1} and Theorem \ref{torus8pi2}).
\end{itemize}
\end{theorem}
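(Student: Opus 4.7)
The plan is to set $m \coloneqq -\text{TC}(\Sigma)/(2\pi) \in \{0,1,2,3,4\}$, which by Fact \ref{property} equals the mapping degree of the Lagrangian Gauss map, and to combine the Osserman-type inequality
$$m \geq g + n - 1,$$
where $g$ is the genus of $\BSigma$ and $n \geq 1$ is the number of ends, with a detailed analysis of the meromorphic extensions of $F$ and $G$ at each end, in order to carry out a case-by-case classification on $m$.

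First, for $m \leq 3$, I would use the inequality $m \geq g + n - 1$ together with the divisorial constraints that completeness (Definition \ref{compdef}) and the structure of singular/regular points impose on the pole orders of $F$ and $dG$ at each end $p_j$. Summing these local data via Riemann--Roch (or by using $\deg dG = 2g-2$ together with a Hurwitz-type count for the Lagrangian Gauss map) rules out $g \geq 1$ whenever $m \leq 3$. Once genus $0$ is forced, $\BSigma = \Chat$ and every admissible $(F,G)$ is a pair of rational functions of bounded degrees; the remaining step is to enumerate these modulo the equivalence coming from unimodular affine transformations of $\R^3$ and match them against the families recorded in Theorems \ref{tc0}, \ref{tc2}, \ref{tc4}, \ref{tc6}.

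For $m = 4$, the Osserman bound alone only gives $g \leq 4$, so I would sharpen it by the same end analysis, but done more carefully: the combinations of pole orders of $F$, $G$, and of the Lagrangian Gauss map compatible with completeness and total degree $4$ leave no room for $g \geq 2$. When $g = 0$, the classification reduces, as in the lower-$m$ cases, to a finite-dimensional enumeration of rational Weierstrass data, which is matched with Theorem \ref{tc80}. When $g = 1$, a classification is not attempted; instead, I would exhibit an explicit elliptic curve $\BSigma$ and explicit elliptic functions $F, G$ on it, verifying single-valuedness of the $\int F\, dG$ term in \eqref{sphererep} (a period condition), completeness of each end, and $\deg = 4$ for the Lagrangian Gauss map. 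This is the content of Proposition \ref{torus8pi1} and Theorem \ref{torus8pi2}.

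The main obstacle is precisely the refinement of the Osserman inequality that excludes positive genus for $m \leq 3$ and genus $\geq 2$ for $m = 4$: the naive inequality has slack, so the argument must exploit the local pole/zero structure of the Lagrangian Gauss map and the Weierstrass data at each end, with careful bookkeeping of how these orders couple to the global degree. A secondary difficulty appears in the genus-$1$ existence part, where the lattice and elliptic functions must be chosen so that the period condition for $\int F\, dG$ is satisfied while the total curvature remains exactly $-8\pi$; the bulk of this step is a concrete computation, but the geometric choice of $\BSigma$ and of the divisors of $F$ and $G$ is the nontrivial input.
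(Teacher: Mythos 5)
Your overall strategy --- bound $(g,n)$ by the Osserman-type inequality, then enumerate Weierstrass data by analyzing the pole orders of $F$, $G$, and $\rho$ at the ends and at the poles of $\rho$, and finally produce an explicit genus-one example at $-8\pi$ --- is the same as the paper's. But there is a concrete error at the very start: the Osserman-type inequality \eqref{ossermanineq1} reads $\deg\rho\geq-\chi(\BSigma_g)+2n=2(g+n-1)$ (see \eqref{ossermanineq2}), not $m\geq g+n-1$ as you state. The missing factor of $2$ is what makes the whole enumeration tractable: with the correct inequality, $m\leq 3$ forces $g+n\leq 2$ (so the only positive-genus candidate is $(g,n)=(1,1)$ and there are at most two ends), and $m=4$ forces $g+n\leq 3$, hence $g\leq 2$ directly --- not $g\leq 4$. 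With your weaker inequality you would additionally have to exclude configurations such as $(g,n)=(0,3)$ or $(2,1)$ at $m=3$, which your sketch does not address and which are in fact excluded only by the correct inequality.

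Two further ingredients of the paper's argument are absent from your outline. First, the equality cases of \eqref{ossermanineq2} --- $(g,n)=(1,1)$ at $m=2$ and $(g,n)=(1,2),(2,1)$ at $m=4$ --- are eliminated not by divisor counting but by the rigidity results of Fact \ref{class}: equality forces all ends to be embedded, and a complete improper affine front with one or two embedded ends is the elliptic paraboloid or a (non-)rotational improper affine front, all of genus $0$. The genuinely delicate positive-genus exclusion is $(g,n)=(1,1)$ at $m=3$, which the paper handles via the residue theorem for elliptic functions ($G'$ would be forced to have a single pole of order $1$ or $2$ in the fundamental parallelogram, contradicting the vanishing of the residue sum); an appeal to ``Riemann--Roch or a Hurwitz-type count'' is too vague to substitute for this. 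Second, for the genus-one existence at $-8\pi$, the data is forced to be $F=a\wp'+b\wp$, $G=c\wp$, and the period condition \eqref{perA} fails for the square and equilateral tori (Remark \ref{doesnotshow}); the paper must run an intermediate value argument in the modulus $\tau=e^{i\alpha}$ (Theorem \ref{torus8pi2}) to locate an admissible lattice. So ``exhibit an explicit elliptic curve'' cannot be accomplished by simply writing one down; the existence of a valid $\tau$ is itself a nontrivial step that your proposal glosses over.
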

We show the existence in a special case for the fourth statement of Theorem \ref{main}. 
In addition, we \tr{show} that there is at least one parameter family of complete improper affine fronts with the total curvature $-8\pi$ and genus $1$, each of which has different \trr{complex} structure (Remark \ref{notunique}).
The complete classification is an open problem in the genus $1$ case.

\trr{Furthermore}, the only known example of complete improper affine fronts of genus $1$ was composed by Mart\'{i}nez in \cite[Section 4, No.6]{IA-map},  whose total curvature is $-12\pi$. \tr{At the end} of Section \ref{classificationsection}, we give a new example of genus $1$ surface whose total curvature is $-10\pi$.

\begin{acknowledgements}
The author would like to express his gratitude to Kotaro Yamada for his helpful advice and comments on the author's research.
In addition,  the author wishes to thank Masaaki Umehara, Yu Kawakami, and Shunsuke Kasao for their valuable comments and fruitful discussions in developing this work.
\tr{The author also sincerely thanks the referees for their insightful feedback and suggestions to improve this paper.}

This work was supported by JST SPRING, Japan Grant Number JPMJSP2180.
\end{acknowledgements}

\section{Preliminaries}\label{preliminary}
Firstly, we will briefly describe some definitions and fundamental facts about \trr{the} geometry of affine immersions in unimodular affine 3-space $\R^3$ (see \cite{simon} and \cite{nomizu} for \trr{details}).
Let $\Sigma$ be a connected and oriented
2-manifold, $\psi \colon \Sigma \to \R^3$ an immersion, and $\xi$ a vector field of $\R^3$ along $\psi$ which is transversal to $d\psi(T\Sigma)$. 
Then, there uniquely exist a torsion-free affine connection $\nabla$, a symmetric $(0,2)$-tensor $h$, a $(1,1)$-tensor $S$, and a $1$-form
$\tau$ on $\Sigma$, which satisfy
\begin{equation}
\left\{
\begin{array}{l}
D_Xd\psi(Y) = d\psi(\nabla_XY) + h(X,Y) \xi,\\
D_X\xi = -d\psi(S(X)) + \tau(X) \xi,
\end{array}
\right.
\end{equation}
where $D$ is the canonical connection of $\R^3,$ and $X,Y$ are vector fields on $\Sigma$. Here, $h$ is called the \emph{affine metric} of $\psi$ with respect to $\xi$.
When $h$ is definite, $\psi$ is said to be \emph{locally strongly convex}.
\tr{From now on}, we will only consider the case that $\psi$ is locally strongly convex (for the indefinite case, see \cite{nakajo}, \cite{Milan3}, \cite{Milan2}, \cite{MM1}, \cite{Mi_20_inde_affine_Cauchy}). For given a locally strongly convex immersion $\psi$, one can uniquely choose the transversal vector field $\xi$ which satisfies 
\begin{equation}
\left\{
\begin{array}{l}\label{xicondition}
D_X\xi = -d\psi(S(X)),\\
\det(d\psi(X), d\psi(Y), \xi) =(h(X,X)h(Y,Y)-h(X,Y)^2)^{1/2},
\end{array}
\right.
\end{equation}
where $\det$ denotes the determinant function of $\R^3$.
The transversal vector field $\xi$ which satisfies \eqref{xicondition} is called the
\emph{affine normal vector field}, and the pair $(\psi, \xi)$ (or simply $\psi$)  is called the \emph{Blaschke immersion}. 
A Blaschke immersion $\psi$ is said to be an \emph{improper affine sphere} if $S=0$ holds in \eqref{xicondition}. 
Then, after equiaffine transformations of $\R^3$ $(\R^3\ni\bm{x}\mapsto A\bm{x}+\bm{b}\in\R^3$, where $A\in \tr{\text{SL}(}\tg{3}\tr{, \R)}$ 
and $\bm{b}\in\R^3$), we can take the affine normal vector field $\xi$ as $\xi=(0,0,1)$.

Next, for any improper affine sphere $\psi\col\Sigma\to\R^3$, considering the conformal structure induced by the affine metric $h$, we regard $\Sigma$ as a Riemann surface. In \cite{IA-map}, Mart\'{i}nez introduced the following complex representation formula for improper affine spheres similar to the Weierstrass representation formula for minimal surfaces in Euclidian 3-space (see \cite{osserman}):

\begin{fact}{\cite[Theorem 4]{FMM2}, \cite[Lemma 1]{FeMaMi}, \cite[Theorem 3]{IA-map}}\label{complex_representation}
Let $\Sigma$ be a Riemann surface, and $(F,G):\Sigma\to\C^2$  a complex regular curve (that is, $F$ and $G$ are holomorphic functions satisfying $(dF, dG) \neq (0, 0)$) which satisfies $|dF|\neq|dG|$ and
$
\Re \int_{\gm} FdG = 0
$
for any closed curve $\gm$ in $\Sigma$. 
Then,
\begin{equation}\label{IAsphererep}
\psi \coloneqq \left( \bbar{F}+G,\fr{1}{2}(|G|^2 - |F|^2) + \Re\left(GF - 2\int FdG\right)\right)
: \Sigma \to \C \times \R=\R^3
\end{equation}
gives an improper affine sphere with the affine normal vector field $\xi=(0,0,1)$. Conversely, any improper affine spheres $\psi: \Sigma\to\R^3$ with the affine normal $\xi=(0,0,1)$ are given in this way, and the complex structure of the 2-manifold $\Sigma$ is compatible with $h$.
\end{fact}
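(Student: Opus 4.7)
The plan is to establish the two implications separately. For the direct implication, define $\psi$ by the stated formula and verify it is an improper affine sphere with affine normal $\xi = (0,0,1)$. Well-definedness on $\Sigma$ is immediate: the only multivalued term, $-2\int F\,dG$, has single-valued real part exactly when $\Re\int_\gamma F\,dG = 0$ for every closed loop $\gamma$, which is the hypothesis. In a local holomorphic parameter $z$, setting $w = \bar F + G$ gives $w_z = G_z$ and $w_{\bar z} = \overline{F_z}$, so the Jacobian of $(\Re w, \Im w)\colon \Sigma \to \R^2$ equals $|G_z|^2 - |F_z|^2$. By the hypothesis $|dF| \neq |dG|$ this is nonzero, so $\psi$ is an immersion and $\xi$ is transversal.

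Because $\xi$ is constant in $\R^3$, $D_X \xi = 0$ identically; the structural equations then force $S = 0$ and $\tau = 0$, so $\psi$ is an improper affine sphere as soon as the determinant normalization in \eqref{xicondition} is verified. To identify the affine metric, decompose $\psi_{XY} = d\psi(\nabla_X Y) + h(X,Y)\xi$ in the complexified frame $(\partial_z, \partial_{\bar z})$. Since $G$ is holomorphic and $\bar F$ antiholomorphic, $(\bar F + G)_{z\bar z} = 0$; together with the transversality above this forces the tangential part of $\psi_{z\bar z}$ to vanish, and reading off the third coordinate yields $h(\partial_z, \partial_{\bar z}) = \tfrac{1}{2}(|G_z|^2 - |F_z|^2)$. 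A parallel but longer calculation for $\psi_{zz}$, in which the tangential coefficients $c,d$ are first solved from the linear system $G_{zz} = c G_z + d\,\overline{F_z}$, $F_{zz} = c F_z + d\,\overline{G_z}$ obtained from the first two components, gives $h(\partial_z, \partial_z) = 0$ via a clean third-component cancellation. Hence $h = (|G_z|^2 - |F_z|^2)(du^2 + dv^2)$ in $z = u + iv$; its square-root determinant equals the Jacobian of $(\Re w, \Im w)$, closing the normalization, and the conformality shows the prescribed complex structure is compatible with $h$.

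For the converse, given an improper affine sphere with $\xi = (0,0,1)$, I would locally write $\psi = (x, y, \varphi(x, y))$ with $\varphi$ solving \eqref{hess1}, take $z$ to be a local holomorphic parameter for the affine metric $h = \Hess(\varphi)$, and introduce $\tau = \varphi_x + i\varphi_y$. Define $F$ and $G$ as specific complex-linear combinations of $x + iy$ and $\tau$ arranged so that $\bar F + G = x + iy$ and so that \eqref{hess1} transforms into the Cauchy--Riemann system $F_{\bar z} = G_{\bar z} = 0$; substituting back into the representation formula recovers the third component of $\psi$, and the period condition $\Re\int_\gamma F\,dG = 0$ is automatic since $\psi$ itself is single-valued on $\Sigma$. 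The main obstacle I anticipate is this algebraic step in the converse---picking the combinations whose holomorphicity is precisely the complex factorization of the Monge--Amp\`ere equation---while in the direct direction the antiholomorphic cancellation of $(\bar F + G)_{z\bar z}$ is what keeps the identification of $h$ free from Christoffel contributions.
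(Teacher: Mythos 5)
The paper offers no proof of this statement: it is imported verbatim as a Fact from \cite{FMM2}, \cite{FeMaMi}, and \cite{IA-map}, so there is no in-paper argument to compare yours against; I can only assess your outline on its own terms, and it follows the standard argument of those references. The direct direction is correct and essentially complete: single-valuedness of the third component under the period condition, the Jacobian $|G_z|^2-|F_z|^2$ of the horizontal projection giving immersedness and transversality of $\xi$, $S=0$ and $\tau=0$ from constancy of $\xi$, and the Gauss-formula computation yielding $h(\partial_z,\partial_{\bar z})=\tfrac12\bigl(|G_z|^2-|F_z|^2\bigr)$ and $h(\partial_z,\partial_z)=0$ (the ``clean cancellation'' you assert does hold; I verified it on $F=az^2$, $G=z$), which simultaneously checks the volume normalization in \eqref{xicondition} and the compatibility of the complex structure with $h$. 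One small point you gloss over: when $|dF|>|dG|$ the quantity $|G_z|^2-|F_z|^2$ is negative while the right-hand side of the determinant condition is a positive square root, so an orientation or sign convention must be fixed; the paper tolerates both signs (cf.\ $|b|\neq1$ in Example \ref{paraboloid}).

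In the converse you correctly name, but do not carry out, the one substantive step. For the record, the combinations are $G=\tfrac12\bigl((x+iy)+(\varphi_x+i\varphi_y)\bigr)$ and $F=\tfrac12\,\overline{(x+iy)-(\varphi_x+i\varphi_y)}$ (consistent with $\mathcal{X}=\bar F+G=x+iy$ and $\mathcal{N}=\bar F-G=-(\varphi_x+i\varphi_y)$ in the paper's notation), and their holomorphicity with respect to an isothermal parameter for $h=\Hess\varphi$ is exactly J\"orgens' complex factorization of $\det\Hess\varphi=1$; recovering the third coordinate and deducing the period condition from single-valuedness of $\psi_3$ then go as you describe. So the converse as written is a correct strategy and an honest outline, but not yet a proof; the direct direction is sound.
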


The pair of holomorphic functions $(F, G)$ is called the \emph{Weierstrass data} of $\psi$.
We find that the metric $ds^2$ represented as
\begin{equation}\label{flatff}
ds^2\coloneqq \inner{d\mathcal{X}}{d\mathcal{X}}=|dF|^2+|dG|^2+dFdG+\bbar{dFdG}
\end{equation}
is a non-degenerate flat metric, where $\mathcal{X}\coloneqq \bbar{F}+G$ is the \tr{first two  components} in \eqref{IAsphererep}, and $\inner{\ }{\ }$ is the standard Euclidian inner product of $\C=\R^2$ under \tr{the} canonical identification. This metric $ds^2$ is called the \emph{flat fundamental form}.
Also, the affine metric $h$ can be expressed as $h=|dG|^2-|dF|^2$.
When $|dG|=|dF|$ holds at a point (i.e., the affine metric $h$ degenerates), the improper affine sphere $\psi$ represented by \eqref{IAsphererep} is not immersed. \trr{The} point also corresponds to the point where the flat fundamental form $ds^2$ degenerates. Hence, using the notations above, Mart\'{i}nez introduced the following concept of improper affine maps, which is a generalization of improper affine spheres in the sense of admitting singularities.

\begin{definition}{\cite[Definition 1]{IA-map}}\label{IA-mapdef}
Let $\Sigma$ be a Riemann surface and $(F,G) : \Sigma \to \C^2$ a complex regular curve satisfying the \emph{period condition}
\begin{equation}\label{period}
\Re\int_{\gm} FdG = 0
\end{equation}
for any closed curve $\gm$ in $\Sigma$.
Then, the map $\psi\colon \Sigma \to \C \times \R=\R^3$ given by
\begin{equation}\label{wrep}
\psi \coloneqq \left(G + \bbar{F},\fr{1}{2}(|G|^2 - |F|^2) + \Re\left(GF - 2\int FdG\right)\right)
\end{equation}
is called an \emph{improper affine map}.
\end{definition}
The singular points of an improper affine map correspond with the points where the affine metric $h$ degenerates and the points where $ds^2$ degenerates.
As shown in \cite{nakajo} and \cite{completeness_lemma}, an improper affine map \tr{is} a (wave) front.
Thus, in this \tr{sence}, we call the improper affine map the \emph{improper affine front} in this paper,
which \trr{was} first referred to as such in \cite{KN}.
The differential geometry of fronts is discussed in \cite{SUY_09_front} and \cite{yamadasing}.
We \tr{note} that the improper affine front \tr{is} \tg{in} a special \tr{class} of affine maximal surfaces with singularities \tr{which are called affine maximal maps}, \tg{defined and investigated in
\cite{AMM_09_affine_Cauchy}, \cite{AMM_09_affine_maximal_map}, \cite{AMM_09_non_removable},
and \cite{AMM_11_extension_Bernstein}.}

\begin{remark}\label{equiaffineremark}
For given an improper affine front $\psi\col\Sigma\to\R^3$ with Weierstrass data $(F,G)$, another improper affine front constructed from $(\widetilde{F},\widetilde{G})$ defined by
\begin{equation}\label{equiaffine}
(\widetilde{F},\widetilde{G})
\coloneqq(\al F+\bt G+\mu, \bbar{\bt}F+\bbar{\al}G+\lambda)
\quad
(\al, \bt, \mu, \lambda \in \C, |\al|^2 - |\bt|^2 = 1)
\end{equation}
gives an equiaffinely equivalent improper affine front.
In particular, for any $\mu,\lambda\in\C$, $(F+\mu, G+\lambda)$ gives \tr{a parallel translation} \tg{of $\psi$} \tr{in $\R^3$.} 
Conversely, any improper affine fronts which \tr{transform} to $\psi$ by an equiaffine transformation
whose differential map preserves the affine normal vector $\xi=(0,0,1)$ are given in this way (\cite{Ferrer1}).
\end{remark}

From now on, $\psi\col\Sigma\to\C\times\R=\R^3$ is an improper affine front with Weierstrass data $(F, G)$.
Next, we shall review the concepts of completeness and some properties for complete improper affine fronts, shown in \cite{IA-map}, which play important roles in this paper.
\begin{definition}{\cite[Definition 2]{IA-map}, \cite[Definition 3.1]{KUY1}}\label{compdef}
An improper affine front $\psi\colon \Sigma \to \C \times \R$ is said to be \emph{complete}
if there exists a symmetric bilinear form $T$ with a compact support such that
\begin{equation}\label{completemetric}
\widetilde{ds}^2 \coloneqq T + ds^2
\end{equation}\label{complete metric}
is a complete Riemannian metric on $\Sigma$, 
where $ds^2$ is the flat fundamental form.
\end{definition}
\begin{fact}{\cite[Proposition 1]{IA-map}}\label{Huber2}
Let $\psi\colon \Sigma \to \C \times \R$ be a complete improper affine front. 
Then,
$\Sigma$ is biholomorphic to $\BSigma \setminus \{p_1, \tr{\dots}, p_n\}$,
where $\BSigma$ is a compact Riemann surface, and $n\geq1$ is an integer.
Moreover, the Weierstrass data $(F, G)$ of $\psi$ can be extended meromorphically to $\BSigma$.
In particular, $F$ and $G$ have at most a pole at each $p_j$.
\end{fact}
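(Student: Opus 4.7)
The plan is to mimic the classical Huber--Osserman strategy for complete minimal surfaces in $\R^3$: use completeness together with finite total curvature to pin down the conformal type of $\Sigma$ via Huber's theorem, then exploit the holomorphicity of $(F,G)$ to upgrade the boundary behavior at each end to a meromorphic extension. First I would apply Huber's theorem to the complete Riemannian metric $\widetilde{ds}^2=T+ds^2$. By \eqref{flatff} the flat fundamental form $ds^2$ is the pullback of the Euclidean metric on $\C$ by the map $\mathcal{X}=G+\bbar{F}$, so its Gaussian curvature vanishes on the regular locus. Since $T$ has compact support, the Gaussian curvature $K$ of $\widetilde{ds}^2$ is concentrated inside $\supp(T)$, and in particular $\int_{\Sigma}|K|\,dA<\infty$. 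Huber's theorem then produces a closed surface $\BSigma$ and points $p_1,\ldots,p_n\in\BSigma$ such that $\Sigma\cong\BSigma\setminus\{p_1,\ldots,p_n\}$ as topological surfaces; because outside the compact set $\supp(T)$ one has $\widetilde{ds}^2=ds^2$, the conformal class near each end coincides with the one induced by the affine metric $h$, so this identification is biholomorphic.

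For the meromorphic extension, I would treat each puncture $p_j$ separately. Fix a coordinate disk $\Delta^{*}=\{0<|z|<1\}$ with $p_j\leftrightarrow 0$. Both $F$ and $G$ are holomorphic on $\Delta^{*}$, and since $\supp(T)$ is bounded away from $p_j$, the metric $ds^2=|G'(z)\,dz+\bbar{F'(z)\,dz}|^2$ itself is complete at $z=0$. The task is to exclude essential singularities of $F$ and $G$ at $0$. The idea is that an essential singularity would, via the Casorati--Weierstrass or big Picard theorem, produce sequences $z_n\to 0$ and tangent directions $w_n$ along which $|G'(z_n)w_n+\bbar{F'(z_n)w_n}|$ is uniformly small; one then constructs curves of bounded $ds^2$-length joining $z_n$ to $0$, contradicting completeness of $ds^2$. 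Once $F$ and $G$ are known to have at most poles at each $p_j$, meromorphic extension to all of $\BSigma$ follows.

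The main obstacle lies in this second step: in contrast to Osserman's classical setting for minimal surfaces, $ds^2$ is not of the form $|\omega|^2$ for a single holomorphic $1$-form $\omega$, but the pullback of a flat metric by the non-holomorphic map $\mathcal{X}=G+\bbar{F}$. Consequently, one must separate the punctured disk into the regions $\{|dF|<|dG|\}$ and $\{|dF|>|dG|\}$ and carefully control the degeneration locus $\{|dF|=|dG|\}$ as $z\to 0$---showing, for instance, that in a suitable punctured neighborhood one of $dF$, $dG$ dominates and can be used to reduce matters to a standard Osserman-type estimate on a holomorphic $1$-form. After this reduction, ruling out essential singularities proceeds along the classical lines, and the statement $n\geq 1$ follows from the non-compactness forced by completeness together with the Bernstein-type theorem, which prevents $\Sigma$ itself from being compact.
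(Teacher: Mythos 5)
This statement is quoted in the paper as a Fact with no proof; it is \cite[Proposition 1]{IA-map}, and the analytic engine behind it is the completeness lemma of Umehara--Yamada \cite{completeness_lemma}. Measured against that, your outline has the right shape (Huber's theorem plus exclusion of essential singularities at the punctures), but it contains a genuine gap at the first step. You assert that outside $\supp T$ the conformal class of $\widetilde{ds}^2=ds^2$ ``coincides with the one induced by the affine metric $h$'', so that Huber's uniformization is automatically a biholomorphism for the given complex structure on $\Sigma$. This is false: in a holomorphic coordinate $z$ one has $h=(|G'|^2-|F'|^2)\,dz\,d\bar z$, which is conformal, whereas by \eqref{flatff} $ds^2=(|F'|^2+|G'|^2)\,dz\,d\bar z+F'G'\,dz^2+\overline{F'G'}\,d\bar z^2$ carries a nonzero $(2,0)$-part whenever $F'G'\neq0$ (already for the elliptic paraboloid). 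So Huber applied to $\widetilde{ds}^2$ uniformizes the wrong conformal structure. To transfer the conclusion to $\Sigma$ you must either control the Beltrami coefficient $\mu=\overline{F'}/G'$, i.e.\ show $|\rho|$ stays uniformly away from $1$ near each end --- which is itself part of what has to be proved --- or run the argument with the conformal metric $d\tau^2=2(|dF|^2+|dG|^2)$, which is complete because $d\tau^2\geq ds^2$, but whose finite total curvature is not known a priori (it equals $2\pi\deg\rho$ only \emph{after} the meromorphic extension is established). This circularity is exactly what the completeness lemma of \cite{completeness_lemma} is designed to break, and your write-up does not address it.

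The second half of your argument is a plan rather than a proof: you correctly identify that $ds^2=|dG+\overline{dF}|^2$ is not $|\omega|^2$ for a holomorphic one-form and propose to split the punctured disk according to which of $|dF|$, $|dG|$ dominates, but the decisive estimate --- that completeness of such a metric at the puncture forces $F$ and $G$ to have at most poles there --- is precisely the content of the cited lemma and is not supplied. Two smaller points: Casorati--Weierstrass produces points where a derivative is small, not short divergent \emph{paths}, so even in the classical $|\omega|^2$ case one needs Osserman's harmonicity argument rather than a direct appeal to Picard; and $n\geq1$ does not follow from ``non-compactness forced by completeness'' (a compact surface is complete), but from the fact that a nonconstant holomorphic function cannot exist on a compact Riemann surface, contradicting $(dF,dG)\neq(0,0)$ --- the argument the paper itself records in Section \ref{classificationsection}.
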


Each puncture point $p_j$ is called an \emph{end} \tr{of an improper affine front}.
\too{In this paper, we study the end such that $ds^2$ is non-degenerate on $U\setminus\{p_j\}$ (i.e., $\psi$ is an improper affine sphere there) when $U$ is a sufficiently small neighborhood of $p_j$ (Section \ref{asymptoticsection}).}
\tg{Moreover}, an end $p$ of $\psi$ is said to be an \emph{embedded end} if there is a small neighborhood $U$ of $p$ such that $\psi |_{U\setminus \{p\}}$ is an embedding.

\tr{We} set $\Sigma = \BSigma_g \setminus \{p_1, \tr{\dots}, p_n\}$, where $\BSigma_g$ is a compact Riemann surface of genus $g \ \tg{(\geq 0)}$, and
let $\psi\colon \Sigma \to \C \times \R = \R^3$ be a complete improper affine front. Here,
\begin{equation}\label{gaussmap}
\rho\coloneqq\fr{dF}{dG}
\end{equation}
defines a meromorphic function on $\Sigma$, and $\rho$ is termed the \emph{Lagrangian Gauss map}.
When we set
$$
\mathcal{L}\coloneqq \mathcal{X}+i\mathcal{N},
$$
where
$\mathcal{X}=\bbar{F}+G$ and $\mathcal{N}\coloneqq\bbar{F}-G$, the map $\mathcal{L}\col\Sigma\to\C^2$ \tr{defines} the special Lagrangian immersion.
The induced metric $d\tau^2$ from $\C^2$ given by
\begin{equation}\label{dtau2}
d\tau^2\coloneqq\mathcal{L}^\ast\inner{}{}_{\C^2}=\inner{d\mathcal{X}}{d\mathcal{X}}+\inner{d\mathcal{N}}{d\mathcal{N}}=2(|dF|^2+|dG|^2)
\end{equation}
is a complete Riemannian metric and conformal to $h$ at points where $h$ is non-degenerate, where
$\inner{}{}_{\C^2}$ stands for the standard Euclidian inner product of $\C^2=\R^4$ (\cite{IA-map}, Theorem 1).
\tg{Mart\'{i}nez and Mil\'{a}n} \too{pointed out} \tg{a local correspondence between improper affine fronts and flat fronts in hyperbolic 3-space} \too{in} \tg{\cite{Mart} and \cite{MM_14}.
That is, \trr{if} we set $\omega = dF, \ \theta= dG$, then $(\omega, \theta)$ locally gives a Weierstrass data of a flat front.
Then, the metric $d\tau^2 = 2(|\omega|^2+|\theta|^2)$ coincides with the pull-back of the Sasakian metric on the unit cotangent bundle of hyperbolic 3-space by the Legendrian lift of the flat front (see \cite{KUY1}).}

\begin{fact}{\cite[Section 3]{IA-map}}\label{property}
A \trr{complete} improper affine front $\psi\colon \Sigma=\BSigma_g \setminus \{p_1, \tr{\dots}, p_n\} \to \C \times \R$ satisfies the following properties:
\begin{itemize}
\item
An end $p_j\ (j=1,2,\tr{\dots}, n)$ is embedded if and only if $F$ and $G$ have at most a simple pole at $p_j$.
\item \emph{(Osserman-type inequality)}
When we denote by $K_{\tau}$ and $dA_{\tau}$ the Gaussian curvature and the area element with respect to $d\tau^2$, it holds that
\begin{equation}\label{ossermanineq1}
-\fr{1}{2\pi} \int_\Sigma K_{\tau} dA_{\tau} \geq -\chi(\BSigma_g) + 2n,
\end{equation}
where $\chi(\BSigma_g)=2-2g$ is the Euler \tr{characteristic} of $\BSigma_g$.
\tr{Moreover, the equality in \eqref{ossermanineq1} holds if and only if all ends are embedded.}
\end{itemize}
\end{fact}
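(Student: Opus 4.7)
The two assertions are independent, so I would treat them separately.

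\emph{Embedded ends.} Fix an end $p_j$ and a local coordinate $z$ centered at $z=0$; by Fact~\ref{Huber2}, $F$ and $G$ admit Laurent expansions at $z=0$. Suppose first that both have at most simple poles: then $\mathcal{X}=\bbar{F}+G$ has the form $\overline{f_{-1}}/\bar z+g_{-1}/z+O(1)$, and I would show that on a sufficiently small punctured disk the map $\mathcal{X}$ is injective with non-vanishing Jacobian; since the third coordinate of $\psi$ is locally a smooth function of $\mathcal{X}$ (via the graph equation $\psi_3=\varphi(\mathcal{X})$), $\psi$ itself is an embedding there. Conversely, if $F$ or $G$ has a pole of order $k\geq 2$ at $p_j$, then the dominant term of $\mathcal{X}$ on a small circle $|z|=\varepsilon$ winds $k\geq 2$ times around $0$, contradicting injectivity of $\psi$; a minor case analysis handles the subcase where $F$ and $G$ share the same top pole order, by ruling out exact cancellation of leading coefficients.

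\emph{Total curvature as a degree.} In a local conformal coordinate $z$, set $F'=dF/dz$, $G'=dG/dz$, and $\rho=F'/G'$, so that $d\tau^2=2|G'|^2(1+|\rho|^2)|dz|^2$. Because $\log|G'|^2$ is harmonic away from zeros of $G'$, the Gaussian curvature is determined by the conformal factor $1+|\rho|^2$; a direct computation gives
\begin{equation*}
-K_\tau\,dA_\tau=\fr{2|\rho'|^2}{(1+|\rho|^2)^2}\,dx\wg dy=\tfrac12\,\rho^{\ast}\omega_{\mathrm{FS}},
\end{equation*}
where $\omega_{\mathrm{FS}}=4|dw|^2/(1+|w|^2)^2$ is the round spherical area form of total mass $4\pi$ on $\Chat$. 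Integrating yields $-\tfrac{1}{2\pi}\int_{\Sigma}K_\tau\,dA_\tau=\deg(\rho)=:m$.

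\emph{Degree bound.} It remains to show $m\geq 2g-2+2n$. View $dF$ and $dG$ as meromorphic $1$-forms on $\BSigma_g$, each of total degree $2g-2$ and holomorphic on $\Sigma$. Let $a_j,b_j\geq 0$ be the pole orders of $F,G$ at $p_j$; completeness of $d\tau^2$ forces $\max(a_j,b_j)\geq 1$, so at each end the pole order of at least one of $dF,dG$ is $\geq 2$. From $\operatorname{div}(\rho)=\operatorname{div}(dF)-\operatorname{div}(dG)$ (of total degree $0$),
\begin{equation*}
2m=\sum_{q\in\BSigma_g}\bigl|\ord_q(dF)-\ord_q(dG)\bigr|.
\end{equation*}
I would then bound the contribution at each $p_j$ by splitting into the cases $a_j,b_j\geq 1$, $a_j=0$ with $b_j\geq 1$, and $a_j\geq 1$ with $b_j=0$, and bound the interior contribution using $\deg(dF)=\deg(dG)=2g-2$. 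I expect this combinatorial bookkeeping to be the main obstacle, especially in the equal-order subcase $a_j=b_j\geq 1$ where the naive $p_j$-contribution vanishes: here the period condition $\Re\int_\gamma F\,dG=0$ (which forces $\Res_{p_j}(F\,dG)\in\R$) together with the regular-curve hypothesis would be invoked to recover the required bound, in the spirit of the Jorge--Meeks argument for complete minimal surfaces in Euclidean $3$-space.
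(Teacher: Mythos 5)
First, a point of comparison: the paper does not prove this statement. It is imported as a quoted Fact from \cite[Section 3]{IA-map}, with the curvature formula \eqref{Gaussian curvature} and the degree identity \eqref{tc} attributed to \cite{Kawa_13}, so there is no in-paper argument to measure yours against; I can only assess your sketch on its own terms and against the standard derivations in the cited sources.

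Your second and third steps are essentially correct, and the place you expect trouble is not where the trouble is. The computation $-K_\tau\,dA_\tau=\tfrac12\rho^{\ast}\omega_{\mathrm{FS}}$ is right and reproduces \eqref{Gaussian curvature}--\eqref{tc}. For the degree bound, your divisor identity closes cleanly without any case analysis: writing $A_j=-\ord_{p_j}(dF)$, $B_j=-\ord_{p_j}(dG)$, using $\deg(dF)=\deg(dG)=2g-2$ and the fact that $dF,dG$ have no common zeros on $\Sigma$ (so that $|\ord_q dF-\ord_q dG|=\ord_q dF+\ord_q dG$ at interior points), your formula collapses to $m=2g-2+\sum_j\max(A_j,B_j)$, and completeness gives $\max(A_j,B_j)\geq \max(a_j,b_j)+1\geq 2$ at every end --- including the subcase $a_j=b_j\geq1$, where $\max(A_j,B_j)=a_j+1\geq2$. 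No appeal to the period condition \eqref{period} or to $\Res_{p_j}(F\,dG)$ is needed, and invoking it would be a red herring. (The usual derivation, as in \cite{IA-map} following \cite{jorge}, instead runs Gauss--Bonnet on the complete metric $d\tau^2$ with boundary contributions at the ends; your divisor count is an equivalent, more algebraic packaging of the same per-end quantity, and is arguably cleaner.)

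The genuine gap is in the embeddedness criterion, exactly where you defer to ``a minor case analysis.'' When $F$ and $G$ have simple poles with coefficients of equal modulus $|f_{-1}|=|g_{-1}|$ (and likewise, in the converse, equal top pole order with $|a_{-k}|=|b_{-k}|$), the leading image of a circle under $\mathcal{X}=\bbar F+G$ degenerates from an ellipse to a doubly traversed segment, and neither the winding-number argument nor the sign of the Jacobian $|G'|^2-|F'|^2$ is controlled by the top term; you cannot ``rule out exact cancellation'' by inspection of the data. The missing lemma is that at a complete end one necessarily has $\lim_{z\to 0}|\rho(z)|\neq 1$: if the limit had modulus $1$ and $\rho$ were nonconstant, the open mapping theorem applied to the meromorphic extension of $\rho$ would produce points with $|\rho|=1$, i.e.\ singular points of $\psi$, accumulating at the puncture, contradicting the fact that a complete improper affine front has compact singular set. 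With that lemma both directions of your winding-number argument go through (and the Jacobian has constant sign near the end, so injectivity plus properness of $\mathcal{X}$ already gives embeddedness of $\psi$ --- the detour through the graph equation $\psi_3=\varphi(\mathcal{X})$ is unnecessary). As written, though, the case you dismiss as minor is the only nontrivial step of that half of the Fact.
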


The integral of $K_\tau dA_\tau$ in the left-hand side of \eqref{ossermanineq1} is called the \emph{total curvature} of $\psi$. \cite[Theorem 1.1, \trr{Corollary} 1.2]{Kawa_13} shows that the Gaussian curvature $K_{\tau}$ with respect to the conformal metric
$d\tau^2=2(|dF|^2+|dG|^2)=2(1+|\rho|^2)|dG|^2$ is
\begin{equation}\label{Gaussian curvature}
K_\tau = - \fr{1}{(1+|\rho|^2)^3} \left|\fr{d\rho}{dG}\right|^2.
\end{equation}
Hence, one can \trr{verify} that the total curvature satisfies
\begin{equation}\label{tc}
\int_\Sigma K_{\tau} dA_{\tau} = -2\pi\deg\rho \in-2\pi\Z_{\geq 0},
\end{equation}%
\too{where $\deg\rho$ denotes the degree of the meromorphic function $\rho$, } and the Osserman-type inequality \eqref{ossermanineq1} can be rewritten as
\begin{equation}\label{ossermanineq2}
\deg\rho \geq 2(g-1+n).
\end{equation}%
\tg{
\begin{remark}
By the proof of \cite[Theorem 4]{IA-map}, the total curvature satisfies the following Jorge--Meeks-type formula (\cite{jorge}):
\begin{equation}\label{JMformula}
-\fr{1}{2\pi}\int_{\Sigma}K_\tau d_A =\deg \rho = -\chi\left(\BSigma_g\right)+\sum_{j=1}^n \max\{m_j+1, n_j+1\},
\end{equation}
where the positive integers $m_j$ and $n_j$ stand for the pole order at the end $p_j$ of $F$ and $G$, respectively.
The proof of \cite[Proposition 2]{IA-map} shows that the map $\mathcal{X} : \Sigma \to \C=\R^2$ \trr{winds} $\max\{m_j, n_j\}$-times along any closed curve around $p_j$. %
\too{
In particular, when $\max\{m_j, n_j\} = 1$ holds for any $j\in\{1,2, \dots, n\}$ (i.e., all ends are embedded), \eqref{JMformula} gives the equality of the Osserman-type inequality.
}
\end{remark}%
}%
Other global results for complete improper affine fronts are also investigated in \cite{MM_14_geometric_aspects} and \cite{ACG_07_Cauchy}.

\tr{At} the end of this section, we \tr{recall} some examples of complete improper affine fronts with only embedded ends.

\begin{example}{\cite[Section 4]{IA-map}}\label{paraboloid}
A complete improper affine front \tr{with} the Weierstrass data $(F,G)$ given by
\begin{equation}\label{PARA}
F = \fr{1}{z},
\quad
G = \fr{b}{z}
\quad 
(b \in \C, \ |b| \neq 1)
\end{equation}
\tr{on $\Sigma = \Chat \setminus \{0\} \ (\Chat\coloneqq\C\cup\{\infty\})$} is called an \emph{elliptic paraboloid} (Figure \Ref{martex}, (a)). Moreover, for each $b$, we find that the elliptic paraboloid is equiaffinely equivalent to a rotational paraboloid\tg{, i.e., the case of $b=0$ in \eqref{PARA}}.
\end{example}

\begin{example}{\cite[Section 4]{IA-map}}\label{R-IA}
A complete improper affine front \tr{with} the Weierstrass data $(F,G)$ given by
\begin{equation}\label{RIA}
F = \fr{1}{z},
\quad
G = a z
\quad
( a \in \R \setminus \{0\})
\end{equation}
\tr{on $\Sigma = \C \setminus \{0\}$} is called a \emph{rotational improper affine front}
(Figure \Ref{martex}, (b)).
\end{example}

\begin{example}{\cite[Section 4]{IA-map}}\label{NR-IA}
A complete improper affine front \tr{with} the Weierstrass data $(F,G)$ given by
\begin{equation}\label{NRIA}
F = \fr{1}{z},
\quad
G = a z + \fr{b}{z}
\quad
(a \in \R \setminus \{0\}, \ b \in \C \setminus \{0\},\   |b| \neq 1)
\end{equation}
\tr{on $\Sigma = \C \setminus \{0\}$} is called a \emph{non-rotational improper affine front}
(Figure \Ref{martex}, (c)).
\end{example}

\begin{center}
\begin{figure}[h]
\begin{tabular}{c@{\hspace{1.5cm}}c@{\hspace{1.5cm}}c@{\hspace{1.5cm}}c}
\includegraphics[width=27mm]{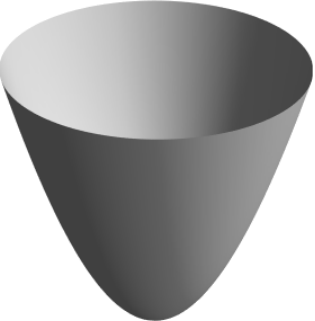}&
\includegraphics[width=24mm]{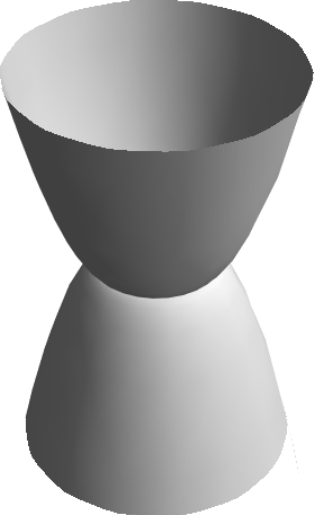}&
\includegraphics[width=27mm]{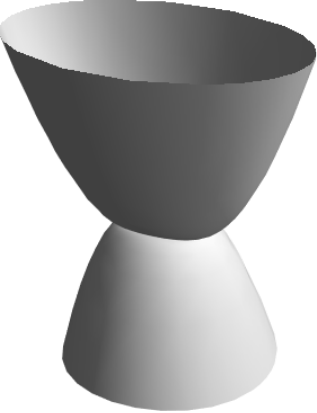}\\
(a) Example \Ref{paraboloid} & (b) Example \Ref{R-IA} & (c) Example \Ref{NR-IA} 
\end{tabular}
\caption{Complete improper affine fronts with only embedded ends}\label{martex}
\end{figure}
\end{center}

\section{Asymptotic behavior of complete embedded ends}\label{asymptoticsection}

\subsection{Asymptotic behavior of the complete embedded ends}

As mentioned in Fact \ref{property}, the equality condition of the Osserman-type inequality \eqref{ossermanineq1} is equivalent to the condition that all ends are embedded. 
Using this fact and the condition for an end to be embedded, we will classify complete embedded ends in the sense of \tg{asymptoticity} into three types \trr{similarly} to \cite{yamadaineq}.
Throughout this section, we take a sufficiently small local complex coordinate neighborhood centered at an end of a complete improper affine front and consider a local expression of an improper affine front
$\psi\colon  \D_{\varepsilon}^{\ast}\coloneqq\{\tg{z\in\C\ ;\ }0<|z|<\epsilon\}  \to \C \times \R$
($\varepsilon>0$) with Weierstrass data $(F,G)$. 
By completeness, we may assume that $\D_{\varepsilon}^\ast$ does not contain the singular set of $\psi$, that is, the flat fundamental form $ds^2$ is non-degenerate on $\D_{\varepsilon}^\ast$. 
Then, $\psi$ is complete at $0$ (that is, the length with respect to $ds^2$ of any path in $\D_{\varepsilon}^{\ast}$ which accumulates to $0$ diverges to $\infty$). 
Hence, we may also assume that $\psi$ is an improper affine sphere on $\D_{\varepsilon}^{\ast}$.

\begin{definition}
An improper affine sphere $\psi\colon \D_{\varepsilon}^{\ast}\to \C \times \R$ \tg{which is} complete at $0$ is said to be \emph{asymptotic} to a \emph{type-P end} (resp. \emph{type-R end}, \emph{type-NR end}) if there exists a piece of an elliptic paraboloid \eqref{PARA} (resp. a rotational improper affine front \eqref{RIA}, a non-rotational improper affine front \eqref{NRIA})
$$
\widetilde{\psi}\colon \D_{\varepsilon}^{\ast} \to \C \times \R
$$
which is complete at $0$ such that
\begin{equation}
|\psi(z) - \widetilde{\psi}(z)| = \trr{o(1)}
\end{equation}
holds, where $o(1)$ means \tr{the} term\tr{s} tending to $0$ as $z \rightarrow 0$, and $|\cdot|$ is the standard Euclidian norm.
Here, we regard $\C\times\R=\R^3$ as Euclidian space, not affine space.
\end{definition}

\begin{theorem}\label{asymptotic}
Let $\psi\colon \D_{\varepsilon}^{\ast}\to\R^3$ be an improper affine sphere \trr{which is} complete at $0$. Then, the end $0$ is embedded end
if and only if $\psi$ is asymptotic to one of the type-P end, type-R end, or type-NR end.
\end{theorem}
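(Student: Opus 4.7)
My approach centers on the Laurent expansions of $(F,G)$ at $0$ together with the equiaffine normalizations from Remark \ref{equiaffine}, pivoting on the characterization in Fact \ref{property}: an end is embedded if and only if both $F$ and $G$ have at most a simple pole there. For the ``only if'' direction I would write
\[
F(z)=\fr{a_{-1}}{z}+\sum_{k\ge 0} a_k z^k,\qquad G(z)=\fr{b_{-1}}{z}+\sum_{k\ge 0} b_k z^k,
\]
then rule out $a_{-1}=b_{-1}=0$ using completeness of $d\tau^{2}=2(|dF|^{2}+|dG|^{2})$ at $0$, and rule out $|a_{-1}|=|b_{-1}|\neq 0$ by inspecting the leading behavior of $|F'|^{2}-|G'|^{2}$ against the non-degeneracy condition $|dF|\neq|dG|$ on $\D_{\varepsilon}^{\ast}$. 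Up to the $U(1,1)$-action on the residue pair and the translation freedom $(\mu,\lambda)$ that kills the constants $a_0, b_0$, this leaves three normal forms matching the leading Laurent data of Examples \ref{paraboloid}, \ref{R-IA}, \ref{NR-IA}; the period condition \eqref{period}, evaluated as $\Im \Res_0(F\,G')=0$, simultaneously forces the parameter $a$ in the type-R and type-NR models to be real.

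With the appropriate model $\tilde\psi$ chosen, I would compare $\psi-\tilde\psi$ coordinate by coordinate from \eqref{wrep}. The planar coordinate $G+\bbar F$ differs from its model only by Laurent tails starting at $O(z)$ and $O(\bbar z)$, hence $o(1)$. The third coordinate needs more care because of $\int F\,dG$: splitting this integral into the model integral plus cross-terms of the forms $z^{-1}z^{k}\,dz$ and $z^{k}z^{l}\,dz$, the period condition pins down the branch of the arising logarithm (or kills it outright in type-P), and the surviving cross-terms integrate to contributions of order $O(z\log|z|)+O(z)=o(1)$. Collecting these gives $|\psi-\tilde\psi|=o(1)$, establishing the ``only if'' direction.

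The converse is essentially stability: each model is itself an improper affine front with only embedded ends (Fact \ref{property}) that is proper at $0$ in $\R^{3}$, so if $|\psi-\tilde\psi|\to 0$ then $\psi$ inherits properness and injectivity on a sufficiently small punctured neighborhood. The main obstacle will be the bookkeeping in the third-coordinate calculation above --- in particular matching residual $\log|z|$ and $1/z$ contributions with those already carried by the type-R/NR models --- and verifying that the three listed cases exhaust every subcase (a vanishing subleading coefficient, for instance, must correctly push the situation from type-NR into type-P rather than into a fourth asymptotic class).
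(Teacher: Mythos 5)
Your forward direction follows the paper's proof in all essentials: Laurent-expand $(F,G)$ at the puncture, invoke Fact \ref{property} to bound the pole orders by one, normalize (the paper takes $F=1/z$ and $b_0=0$ by a coordinate change and a translation; your $U(1,1)$-plus-translation normalization accomplishes the same), read off $b_1\in\R$ from the period condition via the residue of $F\,dG$, and match the cases according to whether $b_{-1}$ and $b_1$ vanish against the three models, with the $\log|z|$ term in the third coordinate absorbed by the type-R/NR models. Your additional observations --- that $a_{-1}=b_{-1}=0$ contradicts completeness at $0$, and that $|a_{-1}|=|b_{-1}|\neq0$ forces the affine metric $h=|dG|^2-|dF|^2$ to change sign on small circles around $0$, contradicting non-degeneracy on $\D_{\varepsilon}^{\ast}$ --- are sound and in fact patch points the paper leaves implicit (e.g.\ that $|b_{-1}|\neq1$, so the type-P model is a genuine elliptic paraboloid).

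The converse, however, has a genuine gap. You assert that if $|\psi-\tilde\psi|\to0$ with $\tilde\psi$ a proper embedding near the puncture, then $\psi$ ``inherits properness and injectivity.'' Properness is indeed inherited, but injectivity is not a $C^0$-stable property: two maps can be uniformly close with only one of them injective, and an end whose Weierstrass data has a pole of order $\geq2$ is still proper, so properness alone does not force embeddedness. As written, your converse would not detect a multiple-sheeted end. The paper instead proves the contrapositive by a growth-rate comparison: if the end is not embedded, then (after normalization) $F$ or $G$ has a pole of order $k\geq2$ at $0$, and \eqref{wrep} gives a third coordinate $\psi_3$ growing like $|z|^{-2k}$, whereas the third coordinate of each of the three models grows only like $|z|^{-2}$ (plus a $\log|z|$ term); hence $|\psi_3-\tilde\psi_3|\to\infty$ and asymptoticity fails. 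To repair your argument you would need either this quantitative estimate or a degree-at-infinity argument showing that the planar coordinate $\bbar{F}+G$ of $\psi$ must wind around $\infty$ the same number of times as that of the model; the bare appeal to stability of embeddedness does not suffice.
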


\begin{proof}
Assume that the end $0$ is embedded.
By Fact \ref{property}, $F$ and $G$ can be expanded around $z=0$ as
\begin{equation}
F = \fr{a_{-1}}{z} + \sum^{\infty}_{n=0} a_n z^n,
\qquad
G= \fr{b_{-1}}{z} + \sum^{\infty}_{n=0} b_n z^n,
\end{equation}
and by exchanging $F$ and $G$ if necessary, we may assume that $a_{-1}\neq 0$.
After a parallel translation of $\R^3$
and coordinate changes on $\D_{\epsilon}^{\ast}$,
we \tr{may} suppose that  $F = 1/z$ and $b_0=0$.
In addition, the period condition \eqref{period} is equivalent to $b_1\in\R$.

By \eqref{wrep}, we obtain the concrete expression of $\psi$ as,
\tg{
\begin{multline*}
\psi(z)=
\left(b_1z+\fr{b_{-1}}{z}
\fr{1}{\bar{z}}, \right. \\
\left.
\fr{1}{2}
\left(
b_1^2|z|^2-\fr{1}{|z|^2}
\left(\trr{1-}|b_{-1}|^2-2b_1\Re(\overline{b_{-1}}z^2)\right)
\right)
-2b_1\log |z|
\right)
+o(1)
\end{multline*}
}%
up to additive constant vectors of $\R^3$.
We divide the situation into the following two cases.

\underline{{\bf Case 1}}
\quad
The case of $b_{-1} = 0$.

{\bf (I)}
\
When $b_1 \neq 0$,
$\psi$ is asymptotic to the type-R end.
In fact, \tr{by} Example \Ref{R-IA},
the rotational improper affine front with the Weierstrass data
$F = 1/z, \  G=b_1z \ (b_1 \in \R\setminus\{0\})$ is expressed as
\begin{equation}\label{psiR}
\tilde{\psi}_{\text{R}} (z)\coloneqq
\left(
b_1z+\fr{1}{\bar{z}},\ \fr{1}{2}\left(b_1^2|z|^2-\fr{1}{|z|^2}\right)-2b_1\log |z|
\right)
\end{equation}
up to an additive constant vector, and it holds that
$$
|\psi(z)-\tilde{\psi}_{\text{R}}(z)| = \trr{o(1)}.
$$

{\bf (II)}
\
When $b_1 = 0$,
$\psi$ is asymptotic to the type-P end. In fact, from Example \Ref{paraboloid}, Weierstrass  data of the elliptic paraboloid for $b=0$, $F=1/z, \  G=0$
corresponds to the surface
\begin{equation}
\tilde{\psi}_{\check{\text{P}}}(z) \coloneqq
\left(
\fr{1}{\bar{z}},\ -\fr{1}{2|z|^2}
\right),
\end{equation}
from \eqref{wrep}, and one can obtain
$$
|\psi(z)-\tilde{\psi}_{\check{\text{P}}}(z)|=o(1).
$$

\underline{{\bf Case 2}}
\quad
The case of $b_{-1} \neq 0.$

{\bf (I)}
\
When $b_1 \neq 0$,
$\psi$ is asymptotic to the type-NR end. In fact, from Example \Ref{NR-IA}, Weierstrass  data is given by
$F=1/z, \  G=b_1z+b_{-1}/z \  (b_1\in\R\setminus\{0\}, b_{-1}\in\C\setminus\{0\}, |b_{-1}|\neq1).$
Then, this surface is expressed as
\tg{
\begin{multline}\label{psiNR}
\tilde{\psi}_{\text{NR}}(z)
\coloneqq
\left(b_1z+\fr{b_{-1}}{z}+
\fr{1}{\bar{z}}, \right. \\
\left.
\fr{1}{2}
\left(
b_1^2|z|^2-\fr{1}{|z|^2}
\left(\trr{1-}|b_{-1}|^2-2b_1\Re(\overline{b_{-1}}z^2)\right)
\right)
-2b_1\log |z|
\right)
\end{multline}
}%
up to an additive constant vector.
Hence, we have
$$
|\psi(z)-\tilde{\psi}_{\text{NR}}(z)|=o(1).\\
$$

{\bf (II)}
\
When $b_1 = 0$, $\psi$ is asymptotic to the type-P end. Indeed, from Example \Ref{paraboloid}, Weierstrass  data of elliptic paraboloid for $b=b_{-1}$ is given by
$F=1/z, \ G=b_{-1}/z \ (|b_{-1}|\neq1),$
and then this surface can be expressed as
\begin{equation}\label{psiP}
\tilde{\psi}_{\text{P}}(z)\coloneqq
\left(\fr{b_{-1}}{z}+\fr{1}{\zb},\ \fr{1}{2|z|^2}(|b_{-1}|^2-1)\right)
\end{equation}
from \eqref{wrep}. 
Therefore, we find
$$
|\psi(z)-\tilde{\psi}_{\text{P}}(z)|=o(1).
$$

Conversely,
we suppose that an improper affine sphere $\psi\col\D_{\epsilon}^\ast\to\R^3$ \tg{which is} complete at $0$ is asymptotic to one of those three types.
Now, assume that $0$ is not an embedded end.
Then, from Fact \Ref{property}, $F$ and $G$ can be expanded to
$$
F=\sum_{n=\tr{-k}}^{\infty}a_nz^n,
\quad
G=\sum_{n=\tr{-l}}^{\infty}b_nz^n
\quad (a_{-k},b_{-l} \neq 0, \ \tr{k\geq l,\ k\geq 2})
$$
around $z=0$.
\tr{Similarly}, 
we may assume that $F = 1/z^k, b_0=0$,
and the period condition is equivalent to $b_k \in \R$.
Putting $\psi =(\psi_1+i\psi_2, \psi_3) \in \C \times \R$, we can compute $\psi_3(z)$ as
$$
\psi_3(z)
=
\fr{1}{|z|^{2k}}\left(-\fr{1}{2}+O(1)\right)
$$
\tr{by} \eqref{wrep}, where $O(1)$ is \tr{the} bounded term\tr{s} as $z\to0$. If $\psi$ is asymptotic to the type-R end, then from \eqref{psiR}, we have
$$
|\psi_3(z)-(\psi_{\text{R}})_3(z)|
=\fr{1}{|z|^{2k}}\left|-\fr{1}{2}+O(1)\right|
\to
\infty
\quad
(z \to 0).
$$

This contradicts the assumption of asymptoticity. Similarly, we can lead contradictions \trr{in} the cases of
the type-P end and the type-NR end. Therefore, we obtain the conclusion.
\end{proof}

Combining the equality condition of the Osserman-type inequality \eqref{ossermanineq1} with Theorem \ref{asymptotic}, one can directly show the following corollary:

\begin{corollary}\label{equality-condition}
A complete improper affine front in $\C \times \R = \R^3$ attains the equality in the Osserman-type inequality \eqref{ossermanineq1} if and only if each end is asymptotic to one of the type-P end, type-R end, or type-NR end.
\end{corollary}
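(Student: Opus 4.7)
The plan is to deduce this corollary by composing two equivalences: \textbf{(i)} equality in the Osserman-type inequality \eqref{ossermanineq1} is equivalent to the embeddedness of every end; \textbf{(ii)} an embedded end is equivalent to being asymptotic to one of the type-P, type-R, or type-NR ends. The second equivalence is just Theorem \ref{asymptotic} applied at each puncture, so the only substantive point is (i), which is flagged (without proof) at the start of Section \ref{asymptoticsection}.

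For (i), I would rewrite the Osserman-type inequality in the form \eqref{ossermanineq2}, namely $\deg\rho\geq 2(g-1+n)$, and count the contribution of each end $p_j$ to $\deg\rho=\deg(dF/dG)$ in terms of the pole orders of $F$ and $G$ at $p_j$. A bookkeeping argument parallel to the derivation of \eqref{ossermanineq2} in \cite{IA-map} shows that each end's contribution to $\deg\rho$ is minimized, and the global inequality becomes sharp, precisely when both $F$ and $G$ have at most a simple pole at every $p_j$. By the first bullet of Fact \ref{property}, this is exactly the embeddedness of every end.

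For (ii), at each end $p_j$ I would choose a punctured coordinate neighborhood $\D_{\varepsilon}^{\ast}$, shrunk enough that $\psi$ restricted there is an improper affine sphere complete at $p_j$; such neighborhoods exist by Fact \ref{Huber2} together with completeness. Theorem \ref{asymptotic} then states that this local piece is asymptotic to one of type-P, type-R, or type-NR if and only if $p_j$ is embedded. Taking the conjunction over $j=1,\ldots,n$ and combining with (i) yields the corollary.

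The only real obstacle is making step (i) precise; the closing application of Theorem \ref{asymptotic} is purely mechanical. Since the paper's own wording \emph{``one can directly show''} indicates that no new ideas beyond the pole-counting used to prove \eqref{ossermanineq2} are required, I expect no serious difficulty once the accounting of pole orders at each $p_j$ is written out explicitly.
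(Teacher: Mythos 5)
Your proposal is correct and follows essentially the same route as the paper: the paper's proof is exactly the composition of the known equivalence ``equality in \eqref{ossermanineq1} $\Leftrightarrow$ all ends embedded'' (recalled at the start of Section \ref{asymptoticsection} from \cite{IA-map} and Fact \ref{property}) with Theorem \ref{asymptotic} applied end by end. The only difference is that you additionally sketch a pole-counting proof of step (i), which the paper simply cites rather than reproves.
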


Symmetry, uniqueness of solutions of the exterior Plateau \trr{problem} associated to \eqref{hess1}, and maximum principle at infinity for improper affine spheres are studied in \cite{FMM2} and \cite{FeMaMi}.

\subsection{New examples with embedded ends}

We introduce new examples of complete improper affine fronts with embedded ends as a correspondence to the Jorge--Meeks minimal surface in Euclidian 3-space (\cite{jorge}). 

Let
$n\geq2$ be an integer and $\Sigma \coloneqq \widehat{\C} \setminus\{1, \zeta,\tr{\dots},\zeta^{n-1}, \eta, \eta \zeta,\tr{\dots},\eta \zeta^{n-1}\}$, where
$\zeta \coloneqq \exp(2\pi i/n), \eta \coloneqq \exp(\pi i/n)$ and $\Chat=\C\cup\{\infty\},$ and consider $(F,G)$ given by
\begin{equation}\label{2n_embedded}
F = \sum_{j=0}^{n-1} \fr{\al_j}{z-\zeta^j},
\qquad
G=\sum_{\tg{k}=0}^{n-1} \fr{\bt_{\tg{k}}}{z-\eta \zeta^{\tg{k}}},
\end{equation}
where $\al_j, \bt_{\tg{k}} \in \C \setminus \{0\}$, and $z$ is the canonical complex coordinate of $\C$. 
We obtain the following examples by \tg{an appropriate choice of}  \tr{complex numbers} $\al_j$ and $\bt_k$.

\begin{example}\label{46noid}
We choose $\al_j, \bt_k$ satisfying
$$
\al_j \eta^{n-1} \zeta^{n-j},
\quad
\bt_k \zeta^{n-k}
\in \R
\qquad
(j, k = 0,...,n-1).
$$
For example, we \tr{set}
$
\al_j =\lambda_j \eta \zeta^j,
\bt_k = \mu_k\zeta^k,
$
where $\lambda_j, \mu_k\in\R\setminus\{0\}$.
Then, $(F,G)$ in (\Ref{2n_embedded}) \tr{gives} a complete improper affine front $\psi\colon\Sigma\to\R^3$ with $2n$ embedded ends
(Figure \Ref{2nnoid}: (a) $n=2;\  \tg{\lambda_0=\mu_0=1,\ \lambda_1=\mu_1=-1}$
(b) $n=3;\  \tg{\lambda_0=\lambda_1=\mu_1=\mu_2}$\\
$\tg{=1/5,\ \lambda_2=\mu_0=-1}$).
\end{example}

Moreover, we consider the following example in the special case of $n=2$. Set $\al_j=\bt_{\tg{k}}=1\ (j, \tg{k}=0,1)$ in \eqref{2n_embedded}. 
\begin{example}\label{4noidiikanji}
Let
\begin{equation}\label{4noiddata}
F=\fr{1}{z-1}+\fr{1}{z+1}, \qquad G=\fr{1}{z-i}+\fr{1}{z+i}
\end{equation}
\tg{on $\Sigma=\widehat{\C}\setminus\{\pm1, \pm i\}$.}
Since
the residues of $FdG$ at each ends are
$\Res(FdG,\pm1)=\Res(FdG,\pm i)=0$, the period condition \eqref{period} is satisfied.
Hence, \eqref{4noiddata} \tr{gives} a complete improper affine front with four embedded ends.
This surface is not equiaffinely equivalent to the surface in Example \ref{46noid}, $n=2$ \tg{because of Remark \ref{equiaffineremark}}.
(Figure \Ref{2nnoid}, (c))
\end{example}

\begin{figure}[h]
\begin{center}
\begin{tabular}{c@{\hspace{1cm}}c@{\hspace{1cm}}c}
\includegraphics[width=25mm]{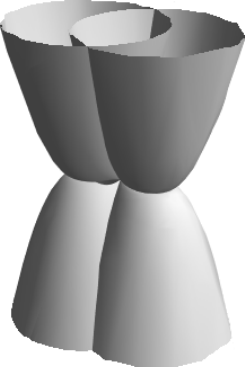}&
\includegraphics[width=23mm]{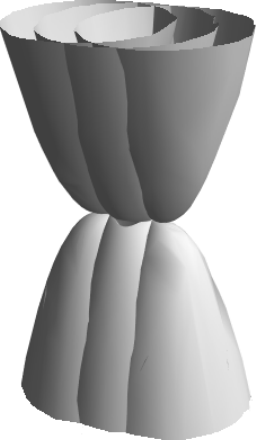}&
\includegraphics[width=23mm]{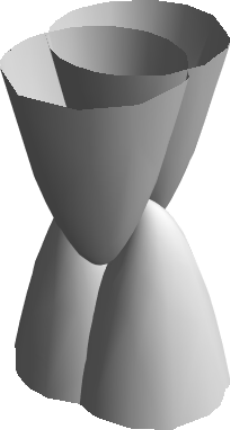}\\
(a) Example \ref{46noid}, $n=2$ & (b) Example \ref{46noid}, $n=3$ & (c) Example \ref{4noidiikanji}
\end{tabular}
\end{center}
\caption{New examples with $2n$ embedded ends}\label{2nnoid}
\end{figure}

\section{Classification of complete improper affine fronts of total curvature $-2m\pi$}\label{classificationsection}

In the minimal surface theory, Osserman \cite{Os} and L\'{o}pez \cite{Lopez} classified complete minimal surfaces in the Euclidian 3-space whose total curvature is $-4m\pi\ (0\leq m\leq2)$ (that is, the mapping degree $m$ of the Gauss map satisfies $0\leq m \leq2$).
In this section, we will classify complete improper affine fronts, up to transformations in Remark \tr{\ref{equiaffineremark}},  whose total curvatures are $0, -2\pi, -4\pi,$ and $-6\pi$,  \tr{and give a partial classification for the case of $-8\pi$.}

Let $\psi\colon \Sigma=\BSigma_g\setminus \{p_1, \tr{\dots} , p_n\} \to \C \times \R = \R^3$
be a complete improper affine front with Weierstrass data $(F, G)$.
At first, we start from describing three facts \trr{to classify surfaces, which were shown} in \cite {IA-map}.

\begin{fact}{\cite[Theorems 5, 6, 7]{IA-map}}\label{class}
\begin{itemize}
\item
A complete improper affine front is the elliptic paraboloid if and only if its Lagrangian Gauss map $\rho$ in \eqref{gaussmap} is constant.
\item
A complete improper affine front with only one end, which is embedded, is the elliptic paraboloid \tg{in} Example \ref{paraboloid}.
\item
If a complete improper affine front with \tg{exactly} two ends, which are embedded, then it is either the rotational improper affine front or the non-rotational improper affine front described in Examples \ref{R-IA} and \ref{NR-IA}.
\end{itemize}
\end{fact}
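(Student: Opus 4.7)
The plan is to prove the three bullets in order, using the Osserman-type inequality \eqref{ossermanineq2} together with Riemann--Roch on $\BSigma_g$ for the topological constraints, and the equiaffine transformation \eqref{equiaffine} for the final normalization.

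For the first bullet, the forward direction is immediate: substituting $F=1/z$ and $G=b/z$ from Example \ref{paraboloid} into \eqref{gaussmap} gives $\rho=1/b$. Conversely, suppose $\rho\equiv c$. The value $|c|=1$ is excluded since it would force the affine metric $h=|dG|^2-|dF|^2$ to vanish identically. For $|c|\ne 1$ (with the convention that $c=\infty$ means $dG\equiv 0$), the relation $F=cG+k$ allows one to solve $|\alpha|^2-|\beta|^2=1$ simultaneously with $\alpha c+\beta=0$ (when $|c|<1$) or with $\bbar\beta c+\bbar\alpha=0$ (when $|c|>1$); applying \eqref{equiaffine} then reduces the data to $(0,G')$ for some non-constant meromorphic function $G'$ on $\BSigma_g$. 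Since $\deg\rho=0$, \eqref{ossermanineq2} forces $g-1+n\le 0$, hence $g=0$ and $n=1$, so $\Sigma\cong\C$ and $G'$ is a polynomial, whose linearity matches Example \ref{paraboloid} after a conformal change of coordinates.

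For the second and third bullets, the embeddedness of the ends combined with Fact \ref{property} implies that $F,G$ extend to meromorphic functions on $\BSigma_g$ with at most simple poles at each $p_j$. When $n=1$ and $g\ge 1$, any non-constant such function would realize $\BSigma_g$ as $\Chat$, which is impossible by the genus assumption; so $F,G$ would both be constant, violating the complex regular curve condition. Hence $g=0$, $F$ and $G$ are polynomials of degree at most one, and $\rho$ is constant, so the first bullet identifies $\psi$ with the paraboloid. When $n=2$ and $g\ge 1$, Clifford's theorem (or direct Riemann--Roch for $g=1$) gives $h^0(\BSigma_g,\mathcal{O}(p_1+p_2))\le 2$, so $F$ and $G$ lie in the span of $1$ and a single non-constant function, which again forces $\rho$ to be constant and contradicts the Osserman bound $\deg\rho\ge 2(g+1)\ge 4$. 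Therefore $g=0$; choosing a coordinate with $p_1=0$ and $p_2=\infty$, absorbing the constant terms of $F,G$ via the shifts $\mu,\lambda$, and invoking the period condition \eqref{period} (which translates to $\Im(a_{-1}b_1-a_1b_{-1})=0$), a further application of \eqref{equiaffine} together with the rescaling $z\mapsto cz$ puts the data in the normal form $F=1/z$, $G=az+b/z$ with $a\in\R\setminus\{0\}$, giving Example \ref{R-IA} when $b=0$ and Example \ref{NR-IA} when $b\ne 0$.

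The main obstacle is the explicit normalization in the third bullet: one must verify that the three real parameters available in $(\alpha,\beta)$ after imposing $|\alpha|^2-|\beta|^2=1$, together with the complex shifts $\mu,\lambda$ and the conformal rescaling $z\mapsto cz$, suffice to reduce the six real parameters in $(a_{-1},a_1,b_{-1},b_1)$ modulo the period condition to precisely the $(a,b)$ of Examples \ref{R-IA} and \ref{NR-IA}, while preserving the nondegeneracy $|dF|\ne|dG|$ on an open set and the embeddedness of both ends; a careful case split according to which of $a_1,b_1$ vanishes will likely be needed. A secondary subtlety in the first bullet is that one must interpret ``the elliptic paraboloid'' strictly as the nonsingular surface of Example \ref{paraboloid} in order to force $G'$ to be linear rather than an arbitrary polynomial, since data such as $(F,G)=(0,z^d)$ with $d\ge 2$ likewise has constant Lagrangian Gauss map but possesses non-removable singularities at the origin.
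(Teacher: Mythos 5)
First, a point of comparison: the paper offers no proof of this statement at all --- it is quoted as a Fact with citations to \cite{IA-map} and \cite{AMM_11_extension_Bernstein} --- so your proposal can only be judged on its own merits. Its skeleton is sound: reducing the first bullet to data of the form $(0,G')$ via \eqref{equiaffine}, pinning down $(g,n)=(0,1)$ with the Osserman-type inequality \eqref{ossermanineq2}, and then deducing the second and third bullets from the simple-pole characterization of embedded ends in Fact \ref{property} together with the estimate $h^0(\BSigma_g,\mathcal{O}(p_1+\cdots+p_n))\le 2$ (which forces $\rho$ constant and contradicts $\deg\rho\ge 2(g-1+n)$ when $g\ge1$) is a correct and efficient route.

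There is, however, one genuine logical error and one acknowledged hole. The error is in your treatment of $(F,G)=(0,z^d)$, $d\ge2$: you propose to exclude it by reading ``the elliptic paraboloid'' as the nonsingular surface only, but that cannot rescue the implication --- if such data defined a complete improper affine front (and fronts are precisely the objects allowed to carry singularities), it would simply be a counterexample to the first bullet. The correct exclusion acts on the hypothesis, not the conclusion: at $z=0$ one has $(dF,dG)=(0,0)$, so $(0,z^d)$ with $d\ge2$ is not a complex regular curve and hence not an improper affine front in the sense of Definition \ref{IA-mapdef} (and on $\C\setminus\{0\}$ it violates \eqref{ossermanineq2}). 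The same condition $dG'\neq0$ on $\C$ is exactly what forces your polynomial $G'$ to be linear, closing the gap in one line. The second issue is the normalization in the third bullet, which you explicitly leave undone. It does go through --- after absorbing constants and rescaling, the residual rotations $z\mapsto e^{i\theta}z$ combined with \eqref{equiaffine} act transitively enough (one can even take $b\ge0$), the period condition $\Im(a_{-1}b_1-a_1b_{-1})=0$ indeed yields $a\in\R$ in the normal form, and completeness at each puncture gives $|b|\neq1$ --- but you must also invoke completeness to discard degenerate configurations such as $a_1=b_1=0$, where $\infty$ is not a genuine end and the surface collapses to the one-ended paraboloid. As written, the proposal names these verifications without performing them.
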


The first assertion of Fact \Ref{class} yields the following theorem directly:
\begin{theorem}\label{tc0}
A complete improper affine front with the total curvature $0$ is the elliptic paraboloid.
\end{theorem}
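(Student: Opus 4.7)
The plan is to reduce Theorem \ref{tc0} to the first assertion of Fact \ref{class}, which characterizes the elliptic paraboloid as the unique complete improper affine front whose Lagrangian Gauss map $\rho$ is constant. So I only need to argue that total curvature $0$ forces $\rho$ to be constant.

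First I would invoke formula \eqref{tc}, which expresses the total curvature as $-2\pi\deg\rho$. Setting this equal to $0$ immediately gives $\deg\rho=0$. Next, recall from Fact \ref{Huber2} that $\Sigma=\BSigma_g\setminus\{p_1,\dots,p_n\}$ with $\BSigma_g$ compact and that $\rho=dF/dG$ extends meromorphically to $\BSigma_g$ since both $F$ and $G$ do (the extension of $\rho$ is also the content leading to \eqref{tc}). A meromorphic map from a compact Riemann surface to $\widehat{\C}$ of degree $0$ has no poles and no zeros as a map to $\widehat{\C}$, hence is constant.

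Finally, with $\rho$ constant, the first bullet of Fact \ref{class} applies and yields that $\psi$ is the elliptic paraboloid, completing the proof.

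There is no real obstacle here; the statement is essentially a one-line corollary of formula \eqref{tc} together with the classification statement in Fact \ref{class}. The only thing to be mindful of is justifying that $\deg\rho=0$ implies $\rho$ is constant, which requires the compactness of $\BSigma_g$ and the meromorphic extension guaranteed by Fact \ref{Huber2}; once this is noted, the rest is automatic.
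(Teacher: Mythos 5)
Your proposal is correct and follows the same route as the paper, which derives Theorem \ref{tc0} directly from the first assertion of Fact \ref{class} via the identity $\mathrm{TC}(\Sigma)=-2\pi\deg\rho$ in \eqref{tc}. Your extra care in justifying that $\deg\rho=0$ forces $\rho$ to be constant (using compactness of $\BSigma_g$ and the meromorphic extension from Fact \ref{Huber2}) is a reasonable elaboration of what the paper leaves implicit.
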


	From now on, without loss of generality, we may assume that $\rho(p)=\infty$ at one end $p\in\overline{\Sigma}$ \tg{by changing roles of $F$ and $G$ and applying a transformation \eqref{equiaffine}}. 
In addition, we \tr{note} the following:

\begin{remark}
\begin{enumerate}\label{bunruityui}
\item
\too{We say that  a meromorphic function $f$ on a compact Riemann surface $\BSigma$ has a zero (resp. pole) of \emph{order} $m\in\Z_{>0}$ 
at $p$, denoting $\ord_p f=m$ (resp. $-m$), if $f$ can be written as $f(z) = (z-p)^m\tilde f(z)$ (resp. $(z-p)^{-m}\tilde f(z)$) locally, where $z$ is a local complex coordinate of $\BSigma$ and $\tilde f$ is a \trr{holomorphic} function around $p$ with $\tilde f(p) \neq 0$.
In particular, we say the zero (resp. pole) is \emph{simple} if $m=1$.  
Elementary complex-analytic arguments give that
$$
\sum_{p\in\BSigma}\ord_p f=0,
\qquad
\deg f = \sum_{p\in f^{-1}(\infty)}(-\ord_p f).
$$
}%
\item
(Residue condition)\ The residues of $F^\pr$ and $G^\pr$ \tg{($^\pr\ \coloneqq d/dz$)} vanish \tr{by the uniqueness of Laurent expansion.}
\item\label{relation2}
If $\rho$ has a pole \tr{of order} \too{$k$} \tr{at a point other than the ends}, then it is a \tr{zero} of $G^\pr$ of order $k$, and $F^\pr\neq0$ holds there because $(F,G)$ satisfies the relation
\begin{equation}\label{relation}
dF = \rho dG,
\end{equation}  
and $(dF,dG)\neq(0,0)$ on $\Sigma$ (cf., Definition \ref{IA-mapdef}).
In particular, $dG\neq0$ on $\Sigma\setminus\rho^{-1}(\{\infty\})$. 
\item
The number of ends $n$ satisfies $n\geq1$. 
In fact, if $n=0$, then $F$ and $G$ \tr{are} holomorphic functions on the compact Riemann \trr{surface} $\BSigma_g$, so they \tr{must} be constant. It contradicts $(dF,dG)\neq(0,0)$.
\end{enumerate}
\end{remark}

\subsection{The case of total curvature $-2\pi$}

\begin{theorem}\label{tc2}
A complete improper affine front with the total curvature $-2\pi$ is obtained from the Weierstrass deta 
\begin{equation}\label{-2pi}
F=az^2, \quad G=z \qquad (a>0)
\end{equation}
defined on $\Sigma=\C$ (Figure \ref{-2pipic}).
\end{theorem}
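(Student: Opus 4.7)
My plan is to use the Osserman-type inequality \eqref{ossermanineq2} to pin down the topology of $\Sigma$, then use the structure of the Lagrangian Gauss map $\rho$ to determine $(F,G)$ up to the symmetries in Remark \ref{equiaffine} and biholomorphisms of $\Sigma$.

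First, I observe that by \eqref{tc}, total curvature $-2\pi$ is equivalent to $\deg\rho=1$, so \eqref{ossermanineq2} yields $1\geq 2(g-1+n)$, i.e.\ $g+n\leq 3/2$. Combined with $n\geq 1$ (noted in the bulleted remark before Section 4.1) and $g\geq 0$, this forces $g=0$ and $n=1$. Hence $\BSigma_g=\Chat$, and after a M\"obius change of coordinate I place the single end at $\infty$, giving $\Sigma=\C$. Since $F$ and $G$ are meromorphic on $\Chat$ with at most a pole at $\infty$, they must be polynomials, and the period condition \eqref{period} is automatic on the simply-connected domain $\C$.

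Next, I adopt the convention $\rho(\infty)=\infty$ (achieved, if needed, by swapping $F$ and $G$ and applying \eqref{equiaffine}). A degree-one meromorphic function on $\Chat$ fixing $\infty$ has the form $\rho(z)=\alpha z+\beta$ with $\alpha\neq 0$. Because $\rho$ has no finite poles, the bulleted remark after Fact \ref{class} shows that $dG$ is nowhere zero on $\C$; in particular $G'$ is a nowhere-vanishing polynomial, so $G$ is linear in $z$. The identity $dF=\rho\,dG$ then forces $F$ to be quadratic in $z$.

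Finally, I reduce $(F,G)$ to the stated normal form via the allowed symmetries. Writing $G(z)=bz+c_0$ and $F(z)=\tfrac{\alpha b}{2}z^2+\beta b\,z+C_0$, a translation of $z$ kills the linear term of $F$; a parallel translation in $\R^3$ (addition of constants to $F$ and $G$, see Remark \ref{equiaffine}) kills the remaining constants; a rescaling of $z$ normalizes $G$ to $G(z)=z$; and combining the equiaffine rotation $(F,G)\mapsto(e^{i\theta}F,e^{-i\theta}G)$ (the case $\alpha=e^{i\theta}$, $\beta=\mu=\lambda=0$ of \eqref{equiaffine}) with the coordinate change $z\mapsto e^{i\theta}z$ multiplies the coefficient of $z^2$ in $F$ by $e^{3i\theta}$, which lets me force it to be real positive. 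Conversely, the data $(F,G)=(az^2,z)$ on $\C$ satisfies the complex-regular and period conditions, and the conformal metric $d\tau^2=2(4a^2|z|^2+1)|dz|^2$ is visibly complete, so it indeed defines a complete improper affine front (with a non-embedded end at $\infty$, consistent with Fact \ref{class}). I expect the only delicate step to be the bookkeeping of the normalizations, in particular combining the rotation of $z$ with the equiaffine action to secure $a>0$ without re-introducing free parameters.
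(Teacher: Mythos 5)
Your proposal is correct and follows essentially the same route as the paper: Osserman-type inequality to force $(g,n)=(0,1)$, hence $\Sigma=\C$ with polynomial data, then $\deg\rho=1$ with $\rho(\infty)=\infty$ and the relation $dF=\rho\,dG$ to get $G$ linear and $F$ quadratic, finishing with coordinate changes and the equiaffine action \eqref{equiaffine}. Your version merely spells out the normalization (the $e^{3i\theta}$ bookkeeping for $a>0$) and the converse completeness check, which the paper leaves implicit.
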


\begin{proof}
\tr{By} \eqref{ossermanineq2}, $(g,n)$ satisfies
$g+n \leq 3/2.$
Then, we see that $(g,n)=(0,1)$.
We may assume that $\Sigma = \widehat{\C}\setminus \{\infty\} = \C$ and $F, G$ are both polynomials.
Moreover, $\rho$ is a M\"{o}bius transformation \tr{because}  $\deg\rho=1$.
By \eqref{relation},
it holds that $G^\pr=c\ (\neq0)$. 
Hence, we may set $G=z$.
Since \tr{$\rho$} satisfies $\rho(\infty)=\infty$, $\rho$ can be written as 
$\rho=az+b\ (a\neq0)$.
Then, we obtain $F=az^2+bz$. 
\tr{After a suitable} coordinate change and equiaffine transformations \eqref{equiaffine},
we get \eqref{-2pi}.
\end{proof}

\begin{figure}[h]
\begin{center}
\begin{tabular}{c@{\hspace{4cm}}c}
\includegraphics[width=28mm]{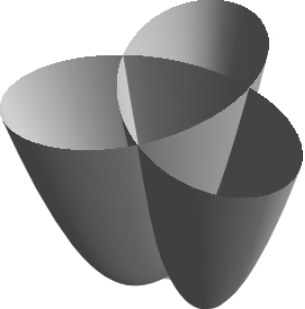}&
\includegraphics[width=28mm]{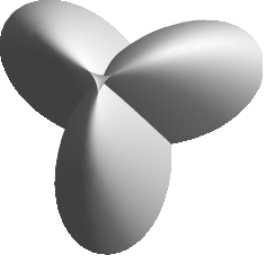}\\
\end{tabular}
\caption{\trr{Complete} improper affine front with total curvature $ -2\pi$ \eqref{-2pi}}\label{-2pipic}
\end{center}
\end{figure}

\begin{remark}
The \cite{krsuy} criteria of singularities for improper affine fronts (\cite{Kodachi19}) shows that the improper affine front in Theorem \ref{tc2} has three swallowtails. A relation between this surface and a flat front in hyperbolic 3-space with three swallowtails is \trr{referred} to in \cite{MM_14} (see also \cite{MM_14_geometric_aspects}). 

\end{remark}

\subsection{The case of total curvature $-4\pi$}

\begin{theorem}\label{tc4}
Complete improper affine fronts with the total curvature $-4\pi$ are the rotational improper affine front, the non-rotational improper affine front, 
and the surfaces constructed by the Weierstrass data
\begin{spacing}{0.95}
\begin{equation}\label{tc-4pi genus-0 1-end 1}
F=az^3+bz, \quad G=z \qquad(a>0, \ b\in\C),
\end{equation}
\begin{equation}\label{tc-4pi genus-0 1-end 2}
F=az^3+bz^2+cz, \quad G=z^2 \qquad (a>0,\  c\in\C\setminus\{0\})
\end{equation}
\end{spacing}
defined on $\Sigma=\C$
(Figure \Ref{tc4pi01}).
\end{theorem}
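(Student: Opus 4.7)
The plan is to follow the same combinatorial bookkeeping as in the proof of Theorem \ref{tc2}, now with $\deg\rho=2$, and then split into cases according to $(g,n)$ and the pole configuration of $\rho$.

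First I would apply the Osserman-type inequality \eqref{ossermanineq2}: $\deg\rho=2$ forces $g+n\leq 2$, so $(g,n)=(0,1)$ or $(0,2)$. For $(g,n)=(0,2)$ the Osserman inequality is an equality, so by the equivalence recalled before Theorem \ref{tc2} (equality $\Longleftrightarrow$ every end is embedded) both ends are embedded; the third bullet of Fact \ref{class} then immediately identifies the surface with either the rotational or non-rotational improper affine front of Examples \ref{R-IA}--\ref{NR-IA}. This disposes of the two-end case with no further calculation.

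The substance of the proof is $(g,n)=(0,1)$. Here $\Sigma=\widehat{\C}\setminus\{\infty\}=\C$ is simply connected, the period condition \eqref{period} is automatic, and after the normalization $\rho(\infty)=\infty$ the Weierstrass data $F,G$ are polynomials in $z$. Since $\rho=F^{\pr}/G^{\pr}$ has degree $2$ and a pole at $\infty$, exactly two pole configurations occur: either $\rho$ has a double pole at $\infty$ with no finite pole (so $\rho$ is a quadratic polynomial), or $\rho$ has simple poles at $\infty$ and at one point $z_{0}\in\C$. In the first case $G^{\pr}$ can vanish nowhere on $\C$ (a finite zero of $G^{\pr}$ together with $\rho$ regular there would force $F^{\pr}$ to vanish at the same point, contradicting $(dF,dG)\ne(0,0)$), hence $G^{\pr}$ is a nonzero constant; normalizing $G=z$ gives $F^{\pr}=\rho$, a degree-two polynomial, so $F$ is a cubic. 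A coordinate translation in $z$ kills the quadratic term of $F$, additive constants are removed by the translational part of Remark \ref{equiaffine}, and the remaining phase freedom in the equiaffine group lets me make the leading coefficient real and positive; this yields exactly \eqref{tc-4pi genus-0 1-end 1}. In the second case, after translating $z_{0}$ to $0$ we have $\rho=\lambda z+\mu+\nu/z$ with $\lambda,\nu\ne 0$; $G^{\pr}$ must vanish at $0$ (to cancel the pole of $\rho$) and cannot vanish anywhere else, for otherwise $F^{\pr}=\rho G^{\pr}$ would also vanish there. So $G^{\pr}=cz$, which normalizes to $G=z^{2}$; then $F^{\pr}=2\rho z$ is a polynomial of degree $2$ and $F$ is a cubic $az^{3}+bz^{2}+cz+d$ with $a,c\ne 0$ (corresponding to $\lambda,\nu\ne 0$). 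The additive constants are absorbed by Remark \ref{equiaffine}, and the remaining scaling/phase freedom normalizes $a>0$, giving \eqref{tc-4pi genus-0 1-end 2}; the middle coefficient $b$ cannot be removed because any translation $z\mapsto z+\gamma$ destroys the form $G=z^{2}$.

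The main obstacle I anticipate is not the case analysis itself but the bookkeeping of normalizations: one must show that in Case (ii) the coordinate and equiaffine freedoms are exhausted by fixing $G=z^{2}$ and $a>0$, so that the parameter $b\in\C$ in \eqref{tc-4pi genus-0 1-end 2} is genuine and not further reducible; this is verified by computing how $(F,G)$ transforms under $z\mapsto\lambda z$ combined with the $(\alpha,\beta,\mu,\lambda)$-action of Remark \ref{equiaffine} and checking that preserving $G=z^{2}$ constrains $\beta=0$ and $\bar\alpha\lambda^{2}=1$, which leaves only a single circle's worth of freedom, enough to normalize $a$ but not $b$. The other routine verification is that in both subcases the only common zeros of $F^{\pr}$ and $G^{\pr}$ on $\Sigma$ are ruled out, confirming that $(dF,dG)\ne(0,0)$ holds globally.
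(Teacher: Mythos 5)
Your treatment of the two cases you do consider matches the paper's proof almost exactly: the same reduction via \eqref{ossermanineq2}, the same appeal to Fact \ref{class} for $(g,n)=(0,2)$ (where equality in the Osserman-type inequality forces both ends to be embedded), and the same two pole configurations of $\rho$ for $(g,n)=(0,1)$ leading to \eqref{tc-4pi genus-0 1-end 1} and \eqref{tc-4pi genus-0 1-end 2}. Your extra bookkeeping on the normalizations (in particular that the coefficient $b$ in \eqref{tc-4pi genus-0 1-end 2} cannot be removed because any translation of $z$ destroys $G=z^2$) is more detail than the paper records, and it is correct.

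There is, however, one genuine gap: the deduction ``$g+n\leq 2$, so $(g,n)=(0,1)$ or $(0,2)$'' silently drops the admissible pair $(g,n)=(1,1)$. Since $n\geq1$ and $g\geq0$, the inequality $g+n\leq2$ also permits a genus-one surface with a single end, and this must be excluded before you may assume $\Sigma=\C$ and work with polynomials. The paper rules it out via Fact \ref{class}: for $(g,n)=(1,1)$ and $\deg\rho=2$ the inequality \eqref{ossermanineq2} is an equality, hence the unique end is embedded, and the second bullet of Fact \ref{class} then forces the surface to be the elliptic paraboloid of Example \ref{paraboloid}, which has constant Lagrangian Gauss map and genus $0$, a contradiction. (One could instead argue directly with elliptic functions, as the paper does for the genus-one case at total curvature $-6\pi$, but some argument is required.) Adding this step closes the gap; the rest of your proposal is sound and follows the paper's route.
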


\begin{proof}
It follows from \eqref{ossermanineq2} that
$g+n \leq 2,$
and the pairs of $(g,n)$ are $(g,n) = (0,2), (1,1), (0,1).$
Recalling Fact \ref{class}, we find that if $(g,n) = (0,2)$, then the surface is either the rotational improper affine front \eqref{RIA} or the non-rotational improper affine front \eqref{NRIA}.
The case of $(g,n) = (1,1)$ \trr{cannot} happen by Fact \ref{class}.
\trr{Hence}, we only have to investigate the case of $(g,n) = (0,1)$.
As \tr{in} the case $\tr{\deg\rho=1}$, we may assume that $\Sigma=\C$. 
Then, $F$ and $G$ are polynomials.

\tr{We} will consider the following two cases.

{\bf (I)}\
The case $\ord_{\infty}\rho = -2$.

We \tr{may} set $G=z$ and $\rho = a_2z^2+a_1 z+a_0\ (a_2\neq0, \ a_1,a_0\in\C)$ by the same reason as in the proof of Theorem \ref{tc2}. Thus, $F$ can be computed, and we obtain
\eqref{tc-4pi genus-0 1-end 1}.

{\bf (II)}\
\tg{The case $\ord_\infty\rho=-1$}

\tg{In this case, there exists unique $p\in \C$ such that $\ord_p\rho=-1$.}

Without loss of generality, we may assume $p=0$.
Then, we have $G^\pr = a z \ (a \neq 0)$.
Hence, $\rho$ and $G$ can be written as
$
G=z^2, \  \rho = a_1 z +a_{-1}/z +a_0 \  (a_1, a_{-1} \in \C\setminus\{0\},\  a_0\in\C),
$
Thus, rewriting the parameters of $\rho$ and changing coordinate,
we have the Weierstrass data \eqref{tc-4pi genus-0 1-end 2}.

Therefore, the proof is completed.
\end{proof}

\begin{figure}[h]
\begin{center}
\begin{tabular}{c@{\hspace{4cm}}c}
\includegraphics[height=28mm]{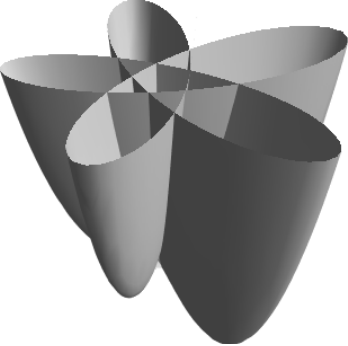}&
\includegraphics[height=28mm]{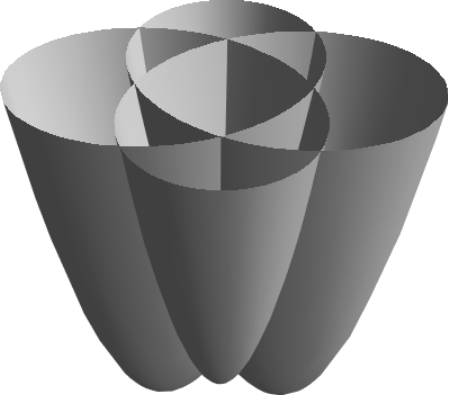}\\
(a) (\Ref{tc-4pi genus-0 1-end 1})
 & (b) (\Ref{tc-4pi genus-0 1-end 2})
\end{tabular}
\end{center}
\caption{Complete improper affine front with total curvature $-4\pi$}\label{tc4pi01}
\end{figure}

\subsection{The case of total curvature $-6\pi$}\label{6pi}

\begin{theorem}\label{tc6}
Complete improper affine fronts with the total curvature $-6\pi$ are constructed by the Weierstrass data 
\begin{equation}\label{601_no1}
F=az^4+bz^2+cz,
\quad
G=z
\quad
(a>0),
\end{equation}
\begin{equation}\label{601_no2}
F=az^4+bz^3+cz^2+dz,
\quad
G=z^2
\quad
(a>0,\ c\neq 0),
\end{equation}
\begin{equation}\label{601_no3}
F=az^4+bz^3+cz^2+dz,
\quad
G=z^3
\quad
(a>0,\ d\neq0),
\end{equation}
\begin{equation}\label{601_no4}
F=az^4+bz^3+cz^2+dz,
\quad
G=\al(2z^3-3z^2)
\quad
(a>0, \ d,\al\neq 0),
\end{equation}
which are defined on $\Sigma=\C$ (Figure \ref{tc6pi01}), 
\begin{equation}\label{602_no1}
F=az^2+bz+\fr{c}{z},
\quad
G=\fr{1}{z}
\quad
(a>0, \ b\in \R,\  |c| \neq 1),
\end{equation}
\begin{equation}\label{602_no2}
F=az + \fr{b}{z}+ \fr{c}{z^2},
\quad
G=\fr{1}{z^2}
\quad
(a>0,\  |c| \neq 1),
\end{equation}
\begin{equation}\label{602_no3}
F= \fr{a}{z^2}+\fr{b}{z} + cz,
\quad
G=\fr{1}{z}
\quad
(a>0, \ c\in\R\setminus\{0\}),
\end{equation}
\begin{equation}\label{602_no4}
F=az+\fr{b}{z}+\fr{c}{z^2},
\quad
G=\alpha\left(\fr{1}{2z^2}-\fr{1}{z}\right)
\quad
(a>0, \ \alpha \in\C\setminus\{0\},\  2|c|\neq |\al|),
\end{equation}
\begin{equation}\label{602_no6}
F=az^2+bz+\fr{c}{z},
\quad
G = \al\left(z+\fr{1}{z}\right)
\quad(a>0,\ b,\al\neq0,\  c-a \in\R,\  |c|\neq |\al|),
\end{equation}
which are  defined on $\tr{\Sigma= \C\setminus\{0\}}$ (Figure\tr{s} \ref{tc6pi01}, \ref{tc6pi02} and \ref{tc6pi02_2}).
\end{theorem}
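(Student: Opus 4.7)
The plan is to mirror the scheme used for Theorems \ref{tc0}--\ref{tc4}: first apply the Osserman-type inequality \eqref{ossermanineq2} to restrict the topological type $(g,n)$, then enumerate the admissible Weierstrass data via a case analysis on the pole structure of $F$, $G$, and $\rho$, normalizing by coordinate changes on $\Sigma$ and the equiaffine action \eqref{equiaffine}. With $\deg\rho=3$, the inequality yields $g+n\le 5/2$, so $(g,n)\in\{(0,1),(0,2),(1,1)\}$. The case $(g,n)=(1,1)$ is excluded by the same sort of argument used in the proof of Theorem \ref{tc4}: expanding $F$ and $G$ in the basis of $L(kp_1)$ on the torus built from $\wp$ and its derivatives, the residue condition combined with Fact \ref{class} rules out any configuration with $\deg\rho=3$.

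For $(g,n)=(0,1)$, take $\Sigma=\C$ with the end at $\infty$, so $F,G$ are polynomials. Since $(dF,dG)\ne(0,0)$ forces $F'\ne 0$ at every zero of $G'$, each zero of $G'$ in $\C$ contributes a pole of $\rho=F'/G'$ of the same multiplicity. Together with the pole of order $\deg F-\deg G$ of $\rho$ at $\infty$, this gives
\[
\deg\rho=(\deg F-\deg G)+(\deg G-1)=\deg F-1=3,
\]
so $\deg F=4$ and $\deg G\in\{1,2,3\}$; the case $\deg G=3$ further splits according to whether $G'$ has a double zero or two distinct zeros. In each of the four subcases I would first normalize the critical points of $G$ to standard positions by an affine change of coordinate, then make the leading coefficient of $F$ real positive by a rotation of $z$ combined with an equiaffine transformation, and finally eliminate the redundant coefficients of $F$ via \eqref{equiaffine}, arriving at \eqref{601_no1}, \eqref{601_no2}, \eqref{601_no3}, \eqref{601_no4} respectively.

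For $(g,n)=(0,2)$, take $\Sigma=\C\setminus\{0\}$ with ends at $0$ and $\infty$, so $F,G$ are Laurent polynomials. A parallel but more elaborate case analysis, stratified by the pole orders of $G$ at the two ends and by the positions and multiplicities of zeros of $dG$ in $\C^{*}$, together with the same degree count, yields the five normal forms \eqref{602_no1}--\eqref{602_no4} and \eqref{602_no6}. The new feature here is that the loop around the origin is a nontrivial homology cycle, so the period condition \eqref{period} becomes a genuine real constraint on the coefficients; this is exactly what forces the reality requirements such as $b\in\R$ in \eqref{602_no1} and $c-a\in\R$ in \eqref{602_no6}.

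The main obstacle is the bookkeeping in the $(0,2)$ case. One must enumerate all consistent distributions of the pole orders of $(F,G,\rho)$ at the two ends, then quotient by the residual symmetries---the equiaffine group \eqref{equiaffine} that preserves the chosen pole structure, the scaling $z\mapsto cz$, and the involution $z\mapsto 1/z$ swapping the two ends---in order to match each equivalence class with exactly one entry of the list. The residue condition is used to eliminate degenerate sub-cases (for instance those where $\rho$ becomes constant, which by the first item of Fact \ref{class} would force the elliptic paraboloid and contradict $\deg\rho=3$), while the period condition fixes the remaining real parameters.
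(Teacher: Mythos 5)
Your overall strategy for the genus-zero cases coincides with the paper's: restrict $(g,n)$ via \eqref{ossermanineq2}, then enumerate pole configurations of $\rho$ and $G'$ (your degree count $\deg\rho=\deg F-1$ and the split $\deg G\in\{1,2,3\}$, with $\deg G=3$ subdividing by whether $G'$ has a double or two simple zeros, reproduces exactly the paper's subcases (I), (II-a), (II-b), (III) for one end, and the analogous stratification for two ends), with the residue condition constraining coefficients and the period condition forcing the reality constraints such as $b\in\R$ in \eqref{602_no1}. That part is fine.

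The gap is in your disposal of $(g,n)=(1,1)$. You claim it is ``excluded by the same sort of argument used in the proof of Theorem \ref{tc4}'', i.e.\ by Fact \ref{class}. But that argument only works when the single end is forced to be embedded, which happens precisely when equality holds in \eqref{ossermanineq2}; for total curvature $-4\pi$ one has $\deg\rho=2=2(g-1+n)$, whereas here $\deg\rho=3>2=2(g-1+n)$, so the end need not be embedded and Fact \ref{class} says nothing. The paper instead excludes this case by a genuine elliptic-function argument: writing $\Sigma=T_\tau\setminus\{[0]\}$ and running through the possible pole configurations of $\rho$ ($\ord_0\rho=-3$; one extra pole of order $-1$ or $-2$; two extra simple poles), one finds in every configuration that $G'$ or $G$ would be an elliptic function whose only pole in the fundamental period parallelogram is a single simple pole, contradicting the vanishing of the sum of residues (Fact \ref{wp}); in the remaining configuration $G'$ would be holomorphic hence constant and $G$ not elliptic. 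You do mention ``the residue condition'' on the torus, so the right ingredient is within reach, but as written the justification you give for discarding $(1,1)$ is the wrong one, and this case cannot be waved away by analogy with Theorem \ref{tc4}.
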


\noindent
\emph{\tr{P}roof of Theorem \ref{tc6}.}
\tr{By} \eqref{ossermanineq2}, we find
$g+n\leq5/2$, and hence $(g,n) = (0,2), (1,1), (0,1).$
These cases are the same $(g,n)$ as the case of the total curvature $-4\pi$ , but note that at least one end is not embedded in the case $(g,n)=(0,2).$

\underline{{\bf Case 1}}
\quad
$(g,n) = (0,1)\quad (\Sigma = \C).$

We further divide {\bf Case 1} into the following {\bf (I)\tg{--}(III)}.

{\bf (I)}
\
The case $\ord_\infty \rho = -3.$

In this case, as with {\bf (I)} in the case of $-4\pi$ and $(g,n)=(0,1)$, we obtain \eqref{601_no1}.

{\bf (II)}
\
The case where there uniquely exists $p\in \C$ which is a pole of $\rho$.

We \tr{must} consider the two more cases:
$$
\text{(II-a)}\ \ord_p\rho=-1,\quad \ord_{\tr{\infty}}\rho=-2,
\quad
\tg{\text{or}}
\quad
\text{(II-b)}\ \ord_p\rho=-2,\quad \ord_{\tr{\infty}}\rho=-1.
$$

(II-a)
We can set $G^\pr=\alpha(z-p)$, and then after some transformations, we get $G=z^2$.
Since $\rho$ is expressed as $\rho=a_2z^2+a_1z+a_{-1}/z + a_0 \ (a_2, a_{-1}\neq 0)$, computing $F$ from the relation (\Ref{relation}), and conducting some transformations, we have Weierstrass  data \eqref{601_no2}.

(II-b)
Similarly, as (II-a), we can set $G=z^3$, and $\rho$ is written by the formation $\rho = a_{-2}/z^2+a_{-1}/z+a_0+a_1z \ (a_{-2}, a_1 \neq 0)$.
Then, we obtain \eqref{601_no3} by \eqref{relation}.

{\bf (III)}
\
The case where there exist distinct points $p,q\in\C$ which are \tr{simple} poles of $\rho$.

Without loss of generality, we can set $p=0, q=1$.
Then, $G^\pr$ can be written as $G^\pr = \al z(z-1) \ (\al\neq 0)$, and by retaking the parameter $\al$,
we have $G=\al(2z^3-3z^2).$
Therefore, by (\Ref{relation}), we get \eqref{601_no4}.

\begin{figure}[h]
\begin{center}
\begin{tabular}{c@{\hspace{0.9cm}}c@{\hspace{0.9cm}}c@{\hspace{0.9cm}}c}
\includegraphics[height=22mm]{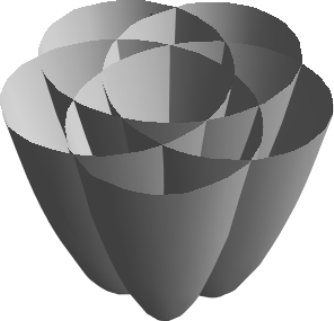}&
\includegraphics[height=22mm]{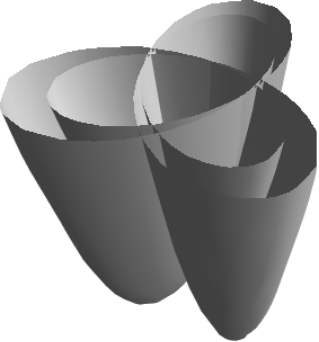}&
\includegraphics[height=22mm]{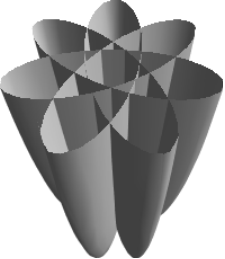}&
\includegraphics[height=19mm]{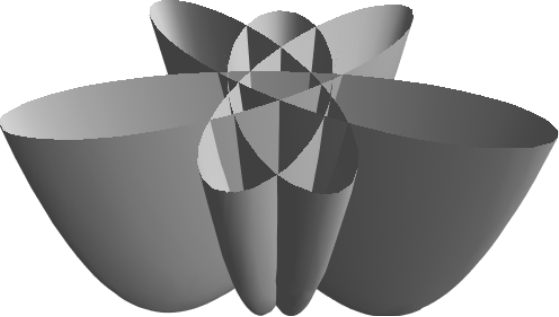}\\
(a) (\Ref{601_no1}) & (b) (\Ref{601_no2}) & (c) (\Ref{601_no3}) & (d) (\Ref{601_no4})
\end{tabular}
\end{center}
\caption{\trr{Complete} improper affine front with total curvature $-6\pi$, and one end} \label{tc6pi01}
\end{figure}

\vskip\baselineskip

\underline{{\bf Case 2}}
\quad
$(g,n) = (0,2) \quad (\Sigma = \C\setminus\{0\}).$

Note that at least one of $F$ and $G$ must not be a polynomial \tr{because of \eqref{relation2} in Remark \ref{bunruityui}}.

\tr{To consider} this case, we must note the period condition and completeness.
We divide this case into the following {\bf (I)\tg{--}(III)}.

{\bf (I)}
\
The case $\ord_{\infty} \rho =-3.$

We can set $\rho =a_3z^3+a_2z^2+a_1z+a_0 \ (a_3\neq 0)$, and $G, G^\pr$ must have a pole at $z=0$.
\tr{By} \eqref{relation}, we \tr{may} set $G^\pr = \al/z^{k} \ (\al\neq 0, k = 2, 3)$, and $G$ is calculated as $G = 1/z^{k-1}$.
Hence after \tr{a suitable} transformation, $F$ is calculated, and \tr{by the period condition \eqref{period}}, we have \eqref{602_no1} and \eqref{602_no2}.

\begin{figure}[h]
\begin{center}
\begin{tabular}{c@{\hspace{4cm}}c}
\includegraphics[width=24mm]{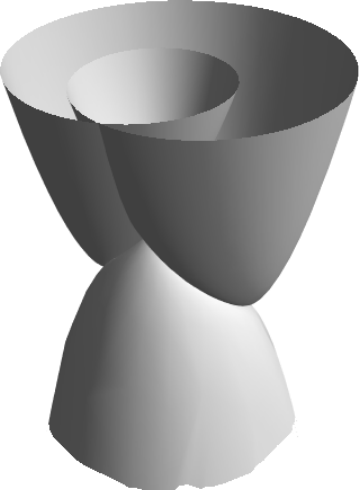}&
\includegraphics[width=24.8mm]{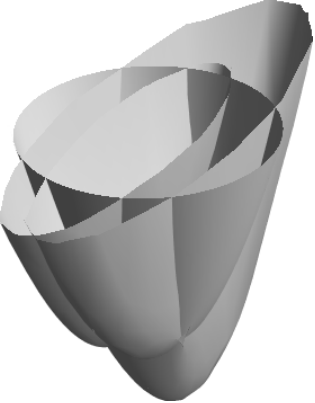}\\
(a) (\Ref{602_no1}) & (b) (\Ref{602_no2})
\end{tabular}
\end{center}
\caption{Complete improper affine fronts with total curvature $-6\pi$, and two ends, No.1}\label{tc6pi02}
\end{figure}

{\bf (II)}
\
The case where there uniquely exists $p\in \C$ which is a pole of $\rho$.

We \tr{must} consider the following two cases:
$$
\text{(II-a)}\ \ord_p\rho=-1,\quad \ord_\infty\rho=-2,
\quad
\tg{\text{or}}
\quad
\text{(II-b)}\ \ord_p\rho=-2,\quad \ord_\infty\rho=-1.
$$

(II-a)
In this case, $\rho$ can be expressed as
$
\rho(z) = a_{-1}/(z-p)+a_0+a_1z+a_2z^2
\ 
(a_{-1},a_2 \neq 0),
$
and $G^\pr \neq 0$ on $\C\setminus\{0, p\}$ \tr{because of \eqref{relation2} in Remark \ref{bunruityui}}. 
Moreover, we divide (II-a) into two more cases.\\
(II-a-1)
When $p=0$, we \tr{may} set $G^\pr = \al/z^2$, so we obtain $G=1/z$.
Thus, we have \eqref{602_no3}.

(II-a-2)
When $p\neq0$\tr{, we may assume $p=1$ and set}
$G^\pr=\al(z-1)/z^3.$
Then,
$a_1-a_2=0$ must hold because of the residue condition for $F^\pr$.
Therefore, we get \eqref{602_no4}.

(II-b)
\ 
$\rho$ is expressed by
$
\rho(z) = a_{-2}/(z-p)^2+a_{-1}/(z-p)+a_0+a_1z
\ 
(a_{-2}, a_1\in\C\setminus\{0\}).
$
Moreover, we divide (II-b) into two more cases.

(II-b-1)
\ 
When $p=0$,
since $G^\pr=\al \ (\al\neq 0)$, we \tr{may} set $G=z$.
Given the residue condition for $F^\pr$, we have
$
F=a/z+bz+cz^2.
$
However, by changing coordinates, we find that this data is the same as \eqref{602_no3}.

(II-b-2)
When $p\neq 0$,
we \tr{may} set
$
G^\pr = \al(z-p)^2/z^k
\ 
(\al \neq 0),
$
and $k$ satisfies $k\geq 4$. 
Then, one can \trr{verify} that $F$ does not have a pole at $\infty$. Thus, this \tr{case} is impossible.

{\bf (III)}
\
The case where there exist distinct points $p,q\in\C$ which are \tr{simple} poles of $\rho$.

$\rho$ is expressed as
$
\rho = a_{-1}/(z-p)+b_{-1}/(z-q)+a_0+a_1z
\ 
(a_{-1}, b_{-1}, a_1 \neq 0).
$
We will consider the following two cases:

(III-a)
\
When $p=0$,
$G^\pr(q)=0$ of order $1$.
Then, we can set
$
G^\pr=\al(z-q)/z^k
\ 
(\al\neq 0, k\geq3).
$
However $\ord_{\infty} F^\pr=\ord_{\infty}\rho+\ord_{\infty}G^\pr = -1+(k-1)=k-2\geq1$, and this \tr{case} is impossible because both $F$ and $G$ do not have a pole at $\infty$.

(III-b)
\
When $p, q \neq 0$ (we \tr{may} set $p=1$),
$G^\pr$ can be written as
$
G^\pr = \al(z-1)(z-q)/z^k
\ 
(\al \neq 0 ),
$
and $k$ satisfies $k=2$ if $1+q=0$, or $k\geq 4$.
However, the latter \tr{case} is impossible for the same reason as (III-a).
If $k=2$, then
we obtain
$G = \al(z+1/z),\ \rho = a_{-1}/(z-1) + b_{-1}/(z+1) + a_1 z + a_0\ (a_{-1}, b_{-1}, a_1 \neq0).$
Retaking parameters, we obtain \eqref{602_no6}.

\begin{figure}[h]
\begin{center}
\begin{tabular}{c@{\hspace{2.5cm}}c@{\hspace{2.5cm}}c}
\includegraphics[width=24mm]{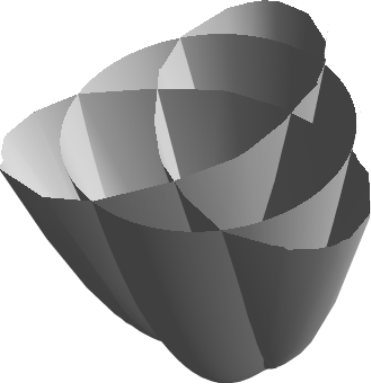}&
\includegraphics[width=22mm]{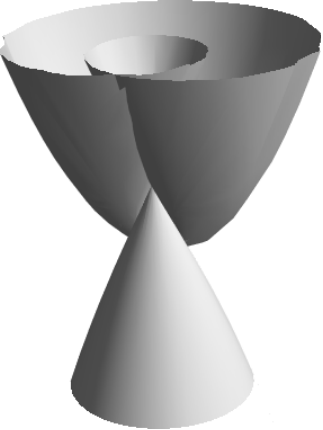}&
\includegraphics[width=22mm]{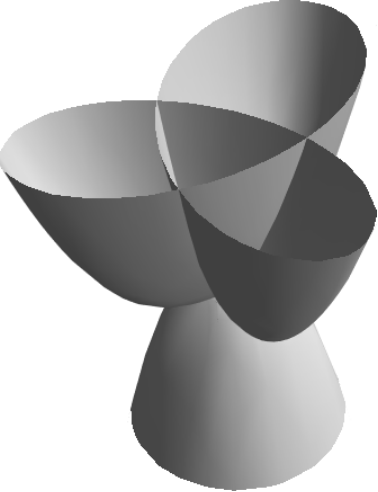}\\
(a) (\Ref{602_no3}) & (b) (\Ref{602_no4}) & (c) (\Ref{602_no6})
\end{tabular}
\end{center}
\caption{Complete improper affine fronts with total curvature $-6\pi$, and two ends, No.2}\label{tc6pi02_2}
\end{figure}

\underline{{\bf Case 3}}{
\quad
$(g,n) = (1,1).$

Let $\tau$ be a complex number satisfying $\Im{\tau} >0$, and set
\begin{equation}\label{torus}
T_\tau\coloneqq \C/[1,\tau],
\end{equation}
where $[1,\tau]$ is a lattice defined by $[1,\tau] \coloneqq \{m + n\tau \ ;\ m,n \in \Z\}$.
In this case, we may assume that $\Sigma=T_\tau \setminus \{[0]\}$ ($[x]$ stands for \tr{the} equivalence class \tr{of} $x$). 
Since $F$,$G$, and $\rho$ are meromorphic functions on $T_\tau$, we can identify them with the elliptic functions on $\C$ associated with $[1,\tau]$. 
We will \tr{apply} general theory of the elliptic functions when we consider the case of genus $1$ (see \cite{courant} for \trr{details}). Let $\Pi_0 \coloneqq \{x + y\tau \ ;\ 0\leq x,y <1\}$ be a fundamental period parallelogram (FPP, in short).
Here, the Weierstrass $\wp$-function associated \tg{with} $[1,\tau]$ is defined by
$$
\wp(z)\coloneqq \fr{1}{z^2}+\sum_{\omega\in[1,\tau],\omega\neq0}\left(\fr{1}{(z-\omega)^2}-\fr{1}{\omega^2}\right), 
$$
and the Laurent expansion of $\wp$ around $0$ is 
$$
\wp(z)
=
\fr{1}{z^2} + 3G_4z^2 + 5G_6z^4 + \ddd,
$$
where $G_k \coloneqq \sum_{\omega \in [1,\tau]\setminus \{0\}}(1/\omega^k)\ (k=4,5,\tr{\dots})$.

\begin{fact}[\cite{courant}]\label{wp}
\begin{enumerate}
\item
Set $e_j \coloneqq \wp(\omega_j)\ (j=1,2,3),$
where
$
\omega_1 \coloneqq 1/2,
\omega_2 \coloneqq (1+\tau)/2,
\omega_3 \coloneqq \tau/2,  
g_2 \coloneqq 60G_4$, and $g_3\coloneqq 140G_6$. 
Then, $\wp$ satisfies
$$
\tr{\wp^\pr(z)^2} = 4\wp(z)^3-g_2\wp(z)-g_3,
$$
\begin{equation}\label{g2g3}
g_2=-4(e_1e_2+e_2e_3+e_3e_1),
\quad
g_3=4e_1e_2e_3.
\end{equation}
\item
\trr{Every} non-constant elliptic function has at least one pole in the FPP. 
In other words, \trr{any} elliptic function which is holomorphic on the FPP is constant.
\item
The sum of residues in the FPP of an elliptic function is $0$.
\item
The number of zero points in the FPP of an elliptic function is equal to the number of its poles in the FPP.
Here, the number of zero points (resp\tr{.} poles) is the sum of the order at each zero point (resp\tr{.} pole).
\end{enumerate}
\end{fact}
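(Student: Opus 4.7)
The plan is to establish the four claims as the foundational package about elliptic functions on the lattice $[1,\tau]$ together with two specific identities that pin down $\wp$. I would treat $(2)$, $(3)$, $(4)$ first as general statements about any elliptic function $f$, and then specialize to $\wp$ for $(1)$.

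For $(2)$, if $f$ is holomorphic on the closure of $\Pi_0$, then continuity makes $f$ bounded on the compact set $\bbar{\Pi_0}$; double periodicity propagates this bound to all of $\C$, and Liouville's theorem forces $f$ to be constant. Hence any non-constant elliptic function must have at least one pole in $\Pi_0$. For $(3)$, I would pick a translate $c+\Pi_0$ of the FPP whose boundary carries no poles of $f$ and integrate $f\,dz$ around its four sides; the two pairs of parallel edges contribute integrals which cancel because $f$ is $1$- and $\tau$-periodic, so the contour integral vanishes. The residue theorem then gives that the sum of residues of $f$ inside $c+\Pi_0$ is zero. For $(4)$, apply $(3)$ to the logarithmic derivative $f'/f$, which is itself elliptic; its residues are $+k$ at a zero of order $k$ and $-k$ at a pole of order $k$ of $f$, so the zero count equals the pole count in the FPP.

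For $(1)$, the ODE comes from forming
\[ F(z) \coloneqq (\wp'(z))^2 - 4\wp(z)^3 + g_2\wp(z) + g_3, \]
which is manifestly elliptic with possible poles only at lattice points. Using the stated Laurent expansion $\wp(z)=z^{-2}+3G_4 z^2+5G_6 z^4+\cdots$, I would expand $(\wp')^2$ and $\wp^3$ up to the constant term and verify that the choices $g_2=60G_4$ and $g_3=140G_6$ make all negative-power and constant contributions cancel. Then $F$ is holomorphic at $0$, hence on all of $T_\tau$ by periodicity, and $F(0)=0$; by $(2)$, $F\equiv 0$. To get the expressions of $g_2,g_3$ in terms of $e_1,e_2,e_3$, note that $\wp'$ is odd and $1,\tau$-periodic, so at each half-period $\wp'(\omega_j)=-\wp'(-\omega_j)=-\wp'(\omega_j)$, hence $\wp'(\omega_j)=0$ for $j=1,2,3$. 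Plugging into the ODE shows that each $e_j$ is a root of $4X^3-g_2 X-g_3=0$. The $e_j$ are distinct because $\wp-e_j$ has a unique double pole in $\Pi_0$ (at $0$), so by $(4)$ it has exactly two zeros counted with multiplicity; the double zero at $\omega_j$ already exhausts them, leaving no room for $\omega_k$ with $k\neq j$. Vieta's formulas applied to $4X^3-g_2X-g_3=4(X-e_1)(X-e_2)(X-e_3)$ yield the claimed identities in \eqref{g2g3}.

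The main obstacle is not conceptual but bookkeeping: verifying $F(0)=0$ requires patiently computing the Laurent expansion of $\wp^3$ through order $z^0$ from the given series for $\wp$, and matching coefficients against $(\wp')^2 + g_2\wp + g_3$. The rest of the argument is the standard Liouville-plus-residues toolkit, and the distinctness of the $e_j$ is a tidy application of $(4)$ to $\wp-e_j$ that becomes automatic once the other parts are in place.
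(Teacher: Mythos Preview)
Your argument is correct and is the standard textbook derivation of these facts. Note, however, that the paper does not actually prove this statement: it is recorded as a \emph{Fact} with a citation to Hurwitz--Courant \cite{courant}, so there is no ``paper's own proof'' to compare against. Your write-up supplies precisely the classical proofs one finds in that reference (Liouville for (2), contour integration over a translated parallelogram for (3), the argument principle via $f'/f$ for (4), and the Laurent-series cancellation plus Vieta for (1)), so nothing further is needed.
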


So we divide {\bf Case 3} into the following {\bf (I)\tg{--}(III)}.

{\bf (I)}
\
The case $\ord_0 \rho = -3$.

Here, $G^\pr \neq 0$ holds on $\Pi_0$ by \eqref{relation}.
Since $G^\pr$ is holomorphic in $\Pi_0$, $G^\pr(z) = c\ (\neq 0)$ holds, and then $G(z)=cz$, but this is not an elliptic function.
Thus, {\bf (I)} does not happen.

{\bf (II)}
\
The case where there uniquely exists $p\in\Pi_0\setminus\{0\}$ which is a pole of $\rho$.

We \tr{must} consider two more \trr{cases:}
$$
\text{(II-a)}\ \ord_p\rho=-1,\quad \ord_0\rho=-2,
\quad
\tg{\text{or}}
\quad
\text{(II-b)}\ \ord_p\rho=-2,\quad \ord_0\rho=-1.
$$

(II-a)
In this case, 
$G^\pr(p)=0$ of order $1$ and $G^\pr\neq 0$ otherwise.
Then, $z=0$ is the only pole of $G^\pr$ of \too{order $1$}, and thus $\Res(G^\pr,z=0) \neq 0$.
This is impossible.

(II-b)
In this case, $z=0$ is the only pole of $G^\pr$ of \too{order $2$}, and then $z=0$ is the only pole of $G$ of \too{order $1$}.
This \trr{case} is impossible \tr{by} the same reason as (II-a).

{\bf (III)}
\
The case where there exist distinct points $p,q\in\Pi_0\setminus\{0\}$ which are \tr{simple} poles of $\rho$.

Then, since $G^\pr(p) = G^\pr(q) = 0$ of order $1$, $G^\pr$ has the unique pole at $z=0$ of \too{order $2$}. 
Hence, this is impossible for the same reason as (II-b).

Summing up the arguments above, we find that there \trr{do} not exist complete improper affine fronts of genus $1$ with total curvature $-6\pi$.
$\square$

\subsection{The case of total curvature $-8\pi$}

In this case, \trr{by the} Osserman-type inequality \eqref{ossermanineq2}, we obtain
$g+n \leq 3$
and find $(g,n)=(0,1),(0,2), (1,1),(1,2), (2,1),(0,3)$.
The cases $(g,n) = (1,2), (2,1)$ \trr{cannot} happen by Fact \ref{class}.
\begin{theorem}\label{tc80}
Complete improper affine fronts with genus $0$ whose total curvature is $-8\pi$ are constructed by the Weierstrass data
\begin{equation}\label{801_no1}
F= az^5+bz^3+cz^2+dz,
\quad
G=z
\quad
(a>0),
\end{equation}
\begin{equation}\label{801_no2}
F=az^5+bz^4+cz^3+dz^2+ez,
\quad
G=z^2
\quad(a>0, \ e\in\C\setminus\{0\}),
\end{equation}
\begin{equation}\label{801_no3}
F=az^5+bz^4+cz^3+dz^2+ez,
\quad
G=z^3
\quad
(a>0, \ e\in\C\setminus\{0\}),
\end{equation}
\begin{equation}\label{801_no4}
F=az^5+bz^4+cz^3+dz^2+ez,
\quad
G=z^4
\quad
(a>0, \ e\in\C\setminus\{0\}),
\end{equation}
\begin{equation}\label{801_no5}
F=az^5+bz^4+cz^3+dz^2+ez,
\quad
G=\al(2z^3-3z^2)
\quad
(a>0, \ \al, e, F^\pr(1)\neq0),
\end{equation}
\begin{equation}\label{801_no6}
F=az^5+bz^4+cz^3+dz^2+ez,
\quad
G=\al(3z^4-4z^3)
\quad
(a>0, \ \al, e, F^\pr(1)\neq 0),
\end{equation}
\begin{equation}\label{801_no7}
F=az^5+bz^4+cz^3+dz^2+ez,
\quad
G=\al(3z^4-4(1+r)z^3+6rz^2)
\end{equation}
$(a>0,\  r\notin\{0,1\},\  \al, e, F^\pr(1), F^\pr(r)\neq 0)$,
\tr{which are} defined on $\Sigma=\C$ \tg{(Figure \ref{tc8pi02})}, 
\begin{align}
F=az^3+bz^2+cz+\fr{d}{z},
\quad
G=\fr{1}{z}
\quad(a>0, c\in\R, |d|\neq1),\label{802_no1}\\
F=az^2+bz+\fr{c}{z}+\fr{d}{z^2},
\quad
G=\fr{1}{z^2}
\quad
(a>0, |d|\neq1),\label{802_no2}
\end{align}
\begin{align}
F=az^2+bz+\fr{c}{z}+\fr{d}{z^2},
\quad
G=\fr{1}{z}
\quad
(a>0, d\neq0),\label{802_no3}\\
F=az+\fr{b}{z}+\fr{c}{z^2}+\fr{d}{z^3},
\quad
G=\fr{1}{z^2}
\quad
(a>0, d\neq0),\label{802_no4}
\end{align}
\begin{equation}\label{802_no5}
F=az^2+bz+\fr{c}{z}+\fr{d}{z^2},
\quad
G=\al\left(-\fr{1}{z}+\fr{1}{2z^2}\right)
\end{equation}
$(a>0,\ b\in\R,\ \al, F^\pr(1)\neq0, \ 2|d|\neq|\al|),$
\begin{equation}\label{802_no6}
F=az+\fr{b}{z}+\fr{c}{z^2}+\fr{d}{z^3},
\quad
G=\al\left(-\fr{1}{2z^2}+\fr{1}{3z^3}\right)
\end{equation}
$(a>0, \ F^\pr(1)\neq0,\ 3|d|\neq|\al|),$
\begin{equation}\label{802_no7}
F=az+\fr{b}{z}+\fr{c}{z^2}+\fr{d}{z^3},
\quad
G=\fr{1}{z}
\quad
(a>0, \ d\neq0),
\end{equation}
\begin{equation}\label{802_no8}
F=az+\fr{b}{z}+\fr{c}{z^2}+\fr{d}{z^3},
\quad
G=\al\left(-\fr{1}{z}+\fr{1}{z^2}-\fr{1}{3z^3}\right)
\end{equation}
$(a>0, \ F^\pr(1)\neq0,\  3|d|\neq|\al|),$
\begin{equation}\label{802_no9}
F=az+\fr{b}{z}+\fr{c}{z^2}+\fr{d}{z^3}
\quad
G=\al\left(\fr{1}{2z^2}-\fr{1}{z}\right)
\quad
(a>0, \ \al, d\neq0),
\end{equation}
\begin{equation}\label{802_no10}
F=az^3+bz^2+cz+\fr{d}{z},
\quad
G=\al\left(z+\fr{1}{z}\right)
\end{equation}
$
(a>0, \ \al,F^\pr(\pm1)\neq0, \ -c+d\in\R, \ |d|\neq|\al|),
$
\begin{equation}\label{802_no11}
F=az+\fr{b}{z}+\fr{c}{z^2}+\fr{d}{z^3}
\quad
G=\al\left(-\fr{1}{z}+\fr{p+1}{2z^2}-\fr{p}{3z^3}\right)
\end{equation}
$(a>0,\ \al\in\R,\  p\in\C\setminus\{0,1\},\ \al, F^\pr(1), F^\pr(p)\neq0,\  3|d|\neq|\al||p|),$
\begin{equation}\label{802_no12}
F=az^3+bz^2+cz+\fr{d}{z},
\quad
G=\al\left(z^2-6z+\fr{8}{z}\right)
\end{equation}
$(a>0, \ \al(3d+4c)\in\R, \ \al, F^\pr(1), F^\pr(-2)\neq0,\  |d|\neq 8|\al|),$
\begin{equation}\label{802_no13}
F=az^2+bz+\fr{c}{z}+\fr{d}{z^2},
\quad
G=\al\left(z+\fr{3}{4z}+\fr{1}{8z^2}\right)
\end{equation}
$(a>0,\  \al(a+3b-4c)\in\R, \ F^\pr(1), F^\pr(-1/2),\al\notin\C\setminus\{0\}, \ 2|d|\neq |\al|),$
\begin{equation}\label{802_no14}
F=az^2+bz+\fr{c}{z}+\fr{d}{z^2},
\quad
G=\al\left(z+\fr{1}{z}\right)
\end{equation}
$(a>0,\  \al, d,F^\pr(\pm1)\notin\C\setminus\{0\},\  \al(b-c)\in\R),$
\begin{align}\label{802_no15}
F=az^3+bz^2+cz+\fr{d}{z},
\quad
G=\al\left(z^2+2(pq-1)z+\fr{2pq}{z}\right)\\\nonumber
\tg{\left(
\begin{aligned}
& a>0, \; p\neq q, \; p,q\notin\{0,1\}, \; p+q=-pq, \; (d-c)pq-d\in\R,\\
& |d|\neq2|pq|, \; F^\pr(p),  F^\pr(q),  F^\pr(1)\notin\C\setminus\{0\}
\end{aligned}
\right),
}
\end{align}
\begin{align}\label{802_no16}
F=az^2+bz+\fr{c}{z}+\fr{d}{z^2},
\quad
G=\al\left(z+\fr{q^2+q+1}{z}-\fr{q(q+1)}{2z^2}\right)\\ \nonumber
\tg{
\left(
\begin{aligned}
&a>0,\  \al\neq0, \ q\notin \{0,\pm1\}, \ \al(4c-4b(q^2+q+1)+aq(q+1))\in\R,\\
&4|d|\neq|\al q(q+1)|,\ F^\pr(1), F^\pr(q),F^\pr(-1-q)\notin\C\setminus\{0\}
\end{aligned}
\right), 
}
\end{align}
\tr{which are} defined on $\C\setminus\{0\}$ \tg{(Figure \ref{tc8pi02})},
and
\begin{equation}\label{803_no1}
F=az+\fr{b}{z-1}+\fr{c}{z},
\quad
G=\fr{\al}{z-1}
\quad
(a>0, \ c,\al\in\R\setminus\{0\},\  |b|\neq\al)
\end{equation}
\begin{equation}\label{803_no2}
F=az+\fr{b}{z-1}+\fr{c}{z},
\quad
G=\al\left(\fr{1}{z}-\fr{1}{z-1}\right),
\end{equation}
$(a>0, \ \Im(b+c)=0,\  \al\in\C\setminus\{0\},\  |b|,|c|\neq|\al|),$
\begin{align}\label{803_no3}
F=az+\fr{b}{z-1}+\fr{c}{z}, \quad
G=\al\left(\fr{pq-1}{z-1}-\fr{pq}{z}\right),\hspace{3cm}\\ \nonumber
\tg{
\left(
\begin{aligned}
&a>0,\  \al\in\C\setminus\{0\}, \ p,q\notin\{0,1\},\ pq\neq1, \ p+q=2pq, \ \Im(1-2pq)=0, \\
&\Im(2(b-c)pq+2c-a)=0,\ |c|\neq|\al||pq|,\  |b|\neq|\al||pq-1|,
\end{aligned}
\right)
}
\end{align}

\tr{which are} defined on $\Sigma=\C\setminus\{0,1\}$ 
(Figure \ref{tc8pi03}).
\end{theorem}

\begin{proof}
\tr{As the same way as the previous cases} of $(g,n)=(0,1), (0,2)$, we get the above Weierstrass data.
Hence, we only need to consider the case of $(g,n) = (0,3).$

In this case, the three ends are all embedded.
We may assume that $\Sigma = \C \setminus\{0,1\}$ and $\rho(\infty) = \infty$.
Firstly, we shall divide this case into the following {\bf (I)\tg{--}(IV)}.

{\bf (I)}
\
The case $\ord_\infty\rho=-4$.

By (\Ref{relation}), we find $G^\pr \neq 0$ on $\C$, and observe that
$
(\ord_\infty G^\pr, \ord_0 G^\pr, \ord_1G^\pr) = (4,-2,-2).
$
Hence, $G^\pr$ can be expressed as
$
G^\pr = \al/z^2(z-1)^2
\ 
(\al \neq0).
$
However, since $\Res(G^\pr, 0)=2\al\neq0$, this case does not happen.

{\bf (II)}
\
The case where there uniquely exists $p\in\C$ which is a pole of $\rho$.

In addition, we must investigate two cases (II-a)\ $p\in\{0,1\}$ and (II-b)\ otherwise.

(II-a)
\
The case $p\in\{0,1\}$.

It is sufficient to consider the case of $p=0$. 
Moreover, this case has to be divided into the following two cases by the orders of $\rho$: 

(II-a-1)
\ 
The case $(\ord_\infty \rho,\ord_0\rho)=(-1,-3).$

We observe
$(\ord_\infty G^\pr, \ord_0 G^\pr, \ord_1G^\pr) = (4,-2,-2),\ (2,0,-2),$ $(0,2,-2).$
The first case is the same as {\bf (I)}, so \tr{this case is} impossible.
The second \tr{case} is also impossible by 
$
\ord_{\infty}F^\pr=\ord_{\infty}\rho+\ord_{\infty}G^\pr=-1+2=1.
$
\trr{In} the last case,
$G^\pr$ can be written as
$
G^\pr=\al z^2/(z-1)^2
\ 
(\al\neq0).
$
Then, we find $\Res(G^\pr,1)=2\al\neq0$, and this is \tr{again} impossible.

(II-a-2)
\ 
The case $(\ord_\infty\rho,\ord_0\rho)=(-2,-2)$.

One can find $(\ord_\infty G^\pr, \ord_0G^\pr, \ord_1G^\pr)=(4,-2,-2), (2,0,-2), (0,2,-2).$
We find $\Res(G^\pr,1)\neq0$ in the first and second cases, so they are \trr{impossible}. 
In the third case, we obtain 
$$
G=\fr{\al}{z-1}
\quad 
(\al\in\C\setminus\{0\}).
$$
\tr{By} \eqref{relation}, we get the Weierstrass \trr{data} \eqref{803_no1}.

(II-b)
\
The case $p\notin\{0,1\}.$

Furthermore, this case is divided into the following three cases:

(II-b-1)
\
The case $(\ord_\infty \rho,\ord_p\rho)=(-3,-1).$

In this time,
we observe $\ord_pG^\pr=1, \ord_0G^\pr=\ord_1G^\pr=-2$, and 
$\ord_\infty G^\pr=3$.
Hence, we can write $G^\pr$ as
$$
G^\pr = \fr{\al(z-p)}{z^2(z-1)^2}
\quad
(\al \in\C\setminus\{0\}),
$$
and by $\Res(G^\pr, 0)=\al(1-2p), \Res(G^\pr, 1)=\al(2p-1)$, $p$ must be $p=1/2$.
Thus, we obtain
$$
G=\al\left(\fr{1}{z}-\fr{1}{z-1}\right)
\quad
(\al\neq0).
$$
We rewrite $\al/2$ as the same $\al$.
Then, $F$ can be calculated by \eqref{relation}, and
we get the Weierstrass  data \eqref{803_no2}.

(II-b-2)
\
The case $(\ord_\infty \rho,\ord_p\rho)=(-2,-2).$

In this case, the orders of $G^\pr$ satisfies
$
(\ord_\infty G^\pr, \ord_0G^\pr, \ord_1G^\pr, \ord_pG^\pr)=(2,-2,\\-2,2).
$ 
Hense, $G^\pr$ can be expressed as
$
G^\pr = \al(z-p)^2/z^2(z-1)^2
\ 
(\al\neq0).
$
Then, it \too{holds} that $\Res(G^\pr,0)=2\al p(p-1)$. Thus $p\neq 0,1$ yields a contradiction.

(II-b-3)
\
The case $(\ord_\infty \rho,\ord_p\rho)=(-1,-3).$

Then,
$
(\ord_\infty G^\pr, \ord_0G^\pr, \ord_1G^\pr, \ord_pG^\pr)=(1,-2,-2,3),
$
and we can express $G^\pr$ as
$
G^\pr=\al(z-p)^3/z^2(z-1)^2
\ 
(\al\neq0).
$
It is necessary to hold $\Res(G^\pr,0)=\al p^2(2p+3)=0$ and $\Res(G^\pr,1)=\al(p-1)^2(2p+1)=0$, but
it is impossible for $p\neq 0,1$.

{\bf (III)}
\
The case where there \trr{exist} distinct points $p,q\in\C$ which are poles of $\rho$.

We will check the three cases
(III-a) $p=0, q=1$, (III-b) $q=0$, $p\notin \{0,1\}$, and (III-c) $p,q \notin \{0,1\}$.
The first case (III-a) is impossible for the same reason as (II-a).

(III-b)
\
The case $q=0$, $p\notin \{0,1\}$.

In this case, the orders of $\rho$ are
$$
(\ord_\infty\rho,\ord_0\rho,\ord_p\rho) = (-2,-1,-1),\tr{\text{or}} \ (-1,-2,-1), \tr{\text{or}}\ (-1,-1,-2),
$$
and $\ord_1G^\pr =-2$ must hold.

(III-b-1)
\
The case $(\ord_\infty\rho,\ord_0\rho,\ord_p\rho) = (-2,-1,-1)$.

Since
$\ord_pG^\pr =1, \ord_0G^\pr \geq1$, $ \ord_\infty G^\pr \geq2$, and $\ord_1G^\pr=-2$, we \tr{see} that
$\sum_{x\in\widehat{\C}}\ord_xG^\pr \geq 1+1+2-2=2$. This is impossible.

(III-b-2)
\
The case $(\ord_\infty\rho,\ord_0\rho,\ord_p\rho)=(-1,-2,-1).$

We know that
$
(\ord_\infty G^\pr, \ord_0G^\pr, \ord_1G^\pr, \ord_pG^\pr)=(1,-2,0,1),\tr{\text{or}}\ (1,0,-2,1).
$
In the first case, we find $\Res(G^\pr, 0)\neq0$. In the second case, we also find $\Res(G^\pr, 1)\neq0$. Hence, these cases are impossible.

(III-b-3)
\
The case $(\ord_\infty\rho,\ord_0\rho,\ord_p\rho) =(-1,-1,-2).$

Given $\ord_pG^\pr=2, \ord_\infty G^\pr=1,$ and $\ord_1G^\pr=-2$, we \tr{see that} $\ord_0G^\pr=-1$.
However, this is impossible.

(III-c)
\
The case $p,q\notin \{0,1\}$.

Furthermore, this case is divided into the following two cases:
$$
(\ord_\infty\rho,\ord_p\rho,\ord_q\rho) = (-2,-1,-1),  \tr{\text{or}}\ (-1,-2,-1).
$$

(III-c-1)
\
The case $(\ord_\infty\rho,\ord_p\rho,\ord_q\rho) = (-2,-1,-1)$.

We observe
$
(\ord_\infty G^\pr, \ord_0G^\pr, \ord_1G^\pr, \ord_pG^\pr, \ord_qG^\pr)=(2,-2,-2,1,1)
$
and \tr{may} set
$$
G^\pr
=
\fr{\al(z-p)(z-q)}{z^2(z-1)^2}\\
=
\al\left(
\fr{pq-p-q+1}{(z-1)^2}+\fr{pq}{z^2}+\fr{p+q-2pq}{z-1}+\fr{2pq-p-q}{z}
\right).
$$
Then, $p+q=2pq, \ pq\neq0,1$ must hold.
Hence, we obtain
$$
G=\al\left(\fr{pq-1}{z-1}-\fr{pq}{z}\right),
$$
and by \eqref{relation}, we get  the Werstrass data \eqref{803_no3}.

(III-c-2)
\
The case $(\ord_\infty\rho,\ord_p\rho,\ord_q\rho) = (-1,-2,-1)$. 

In the same way \trr{as} the case of (III-c-1), we have
$
(\ord_\infty G^\pr, \ord_0G^\pr, \ord_1G^\pr,\\ \ord_pG^\pr, \ord_qG^\pr)=(1,-2,-2,2,1).
$
Then, we obtain
$$
G^\pr=\al\left(-\fr{(q-1)(p-1)^2}{(z-1)^2}-\fr{p^2q}{z^2}-\fr{(p-1)(2pq-p-1)}{z-1}+\fr{p(p+2q-2pq)}{z}\right)
$$
and $2pq-p-1=0$ and $p+2q-2pq=0$ must hold, but \tr{such $p$ and $q$ do not exist.} 

{\bf (IV)}
The case where there exist distinct points $p,q,r\in\C$ which are \tr{simple} poles of $\rho$.

\tr{We} investigate three more cases:

(IV-a)
\
The case $p=0, q=1,$ and $r\notin\{0,1\}.$

Then,
$
\ord_rG^\pr=1,
\ord_0G^\pr,\ord_1G^\pr, \ord_\infty G^\pr\geq1
$
hold.
Hence, we have \\
\noindent$\sum_{x\in\widehat{\C}}\ord_xG^\pr\geq 4$, and it does not happen.

(IV-b)\ The case $p=0,$ and $q,r\notin\{0,1\}$.

It holds that
$\ord_1G^\pr=-2, \ord_qG^\pr=\ord_rG^\pr=1, \ord_0G^\pr\geq0$ and $\ord_{\infty}G^\pr\geq1$.
Thus,
$\sum_{x\in\widehat{\C}}\ord_xG^\pr \geq 1$ holds, and this is a contradiction.

(IV-c)
\
The case $p,q,r\notin\{0,1\}.$

Then, we find that
$
(\ord_\infty G^\pr, \ord_0G^\pr, \ord_1G^\pr, \ord_pG^\pr, \ord_qG^\pr, \ord_rG^\pr) = (1,-2,\\-2,1,1,1).
$
Hence,
$$
G^\pr = \al\left(\fr{a}{(z-1)^2}+\fr{2pqr-pq-qr-rp+1}{z-1}-\fr{b}{z^2}+\fr{pq+qr+rp-2pqr}{z}\right)
$$
$(a, b\in\C, \al\neq0)$ holds, but this does not happen.
In fact, $2pqr-pq-qr-rp+1=0$ and $pq+qr+rp-2pqr=0$ must hold, but \tr{such $p$ and $q$ do not exist.}

Therefore, the proof is finished.
\end{proof}

\begin{figure}[h]
\begin{center}
\begin{tabular}{c@{\hspace{2cm}}c@{\hspace{2cm}}c}
\includegraphics[width=20mm]{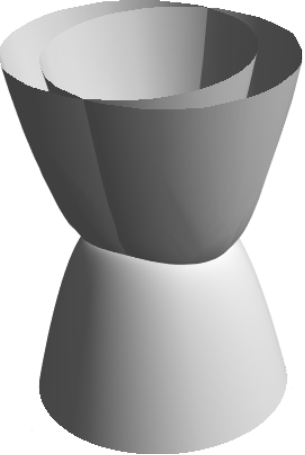}&
\includegraphics[width=20mm]{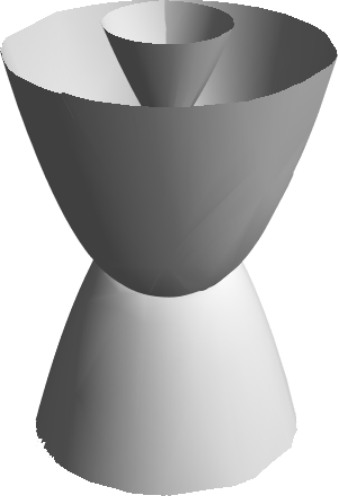}&
\includegraphics[width=20mm]{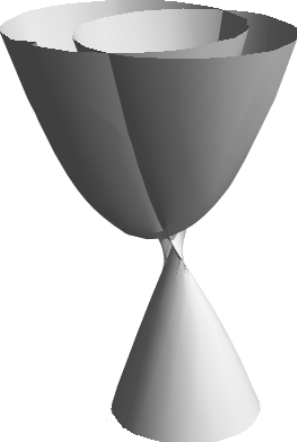}\\
(a) (\Ref{803_no1}) & (b) (\Ref{803_no2}) & (c) \eqref{803_no3}
\end{tabular}
\end{center}
\caption{Complete improper affine front with total curvature $-8\pi$, genus $0$, and three embedded ends}\label{tc8pi03}
\end{figure}

We will investigate the \tr{remaining} case $(g,n) = (1,1).$
We may assume that $\Sigma = T_\tau\setminus\{[0]\}$, where $T_\tau$ is \tr{as} in \eqref{torus}, and $\rho([0])=\infty$, and identify $\rho, F,$ and $G$ with the elliptic functions on $\C$.
\tr{First, we will list up the possible Weierstrass data without thinking \trr{of} the period condition \eqref{period} and then consider the period problem for each case.} 
Since $\deg\rho=4$, $\rho$ may have  \tr{poles other than $0$}. We shall divide this case into the following 
{\bf (I)\tg{--}(IV)}.

{\bf (I)}
\
The case $\ord_0\rho=-4$

By \eqref{relation}, $G^\pr \neq 0$ on $\Pi_0\setminus\{0\}$ and  $G^\pr(0)\neq \infty$.
In particular, $G^\pr$ is holomorphic on $\Pi_0$, so $G^\pr$ is a non-zero constant.
Hence, $G$ is not elliptic.

{\bf (II)}
\
The case where there exists $p\neq0$ which is a pole of $\rho$.
$$
(\ord_0\rho, \ord_p\rho)=(-1,-3), \ \tr{\text{or}}\ (-2,-2), \tr{\text{or}}\ (-3,-1).
$$

(II-a)\ The case $(\ord_0\rho, \ord_p\rho)=(-1,-3)$.

In this case, since $\ord_pG^\pr=3$ and  $G^\pr \neq 0$ on $\Pi_0\setminus\{0,p\}$, $\ord_0 G^\pr=-3$, and then $\ord_0G=-2$.
Hence, $G$ is given by $G = c\wp \  (c\neq0)$.
Also, $\ord_0F^\pr=-3$ holds. 
Thus, \tr{the} Weierstrass  data is given by
$$
F=a\wp^\pr+b\wp,
\quad
G=c\wp
\quad
(a>0, c\neq0).
$$

(II-b)\ The case $(\ord_0\rho, \ord_p\rho)=(-2,-2)$.

Since $\ord_pG^\pr=2$, we know $\ord_0G^\pr=-2$ and $\ord_0G=-1$. This is impossible.

(II-c) \ The case $(\ord_0\rho, \ord_p\rho)=(-3,-1)$

Since $\ord_pG^\pr=1$, we find that $\ord_0G^\pr=-1$. This is also impossible.

{\bf (III)}
\
The case where there are $p,q\in\Pi_0\setminus\{0\}\ (p\neq q)$ which are poles of $\rho$.

If $\ord_p\rho=\ord_q\rho=-1,$ \too{and}  $\ord_0\rho=-2$, then $\ord_pG^\pr=\ord_qG^\pr=1$, and $\ord_0 G^\pr=-2$, so
$\ord_0G=-1$. This does not happen.
On the other hand, \tr{if} we assume that
$
\ord_p\rho=\ord_0\rho=-1,\ \ord_q\rho=-2\tr{,}
$
\tr{then} we obtain $\ord_pG^\pr=1, \ord_qG^\pr=2$\tr{,} and $\ord_0G^\pr=-3$.
From this, we find that $\ord_0G=-2$. 
Hence, $G$ can be written as $G=c\wp \ (c\neq 0).$
Thus, for the same reason as (II-a), we have Weierstrass  data
$$
F=a\wp^\pr+b\wp,
\quad
G=c\wp
\quad
(a>0, c\neq0).
$$

{\bf (IV)}
\
The case where there are distinct points $p,q,r\in\Pi_0\setminus\{0\}$ which are \tr{simple} poles of $\rho$.
Since $\ord_0G=-2$, we obtain
$$
F=a\wp^\pr+b\wp,
\quad
G=c\wp
\quad
(a>0, c\neq0).
$$

Also, in each case, since $G^\pr(p), G^\pr(q), G^\pr(r)=0$, the points $p,q,$ and $r$ are half periods of $[1,\tau]$ because of $G^\pr=c\wp^\pr$ and Fact \Ref{wp}. 
Thus, the case (II-a) and {\bf (III)} do not happen. 
Indeed, for the case (II-a), $3p\equiv0\mod [1,\tau]$ (\cite{courant}). So $p$ is not a half period, which is impossible. {\bf (III)} is also impossible for the same reason as (II-a). 
Therefore, we only have to consider the case of {\bf (IV)} and may assume
$p=1/2, q=(1+\tau)/2, r=\tau/2$.

Now, we shall consider the period condition \eqref{period}. Direct computations give that
$$
\int FdG=c\left(\fr{a}{30}\wp^{\pr\pr\pr}(z)+\fr{2}{5}ag_2\zeta(z)-\fr{3}{5}ag_3z+\fr{1}{2}b\wp(z)^2\right)
$$
up to additive constant, where $\zeta(z)$ is the Weierstrass $\zeta$-function which satisfies 
$\zeta^\pr(z)=-\wp(z)$ and $\lim_{z\to0}(\zeta(z)-1/z)=0$, and $g_2, g_3$ are \tr{as} in \eqref{g2g3}.
In addition, consider two curves $\gamma_1(t)\coloneqq 1/4+\tau t$ \trr{and}  $\gamma_2(t)\coloneqq t+\tau/4\ (t\in[0,1])$,
which \trr{generate} the fundamental group of $\tr{T_\tau}$, and a loop $\gamma$ around the origin.
Then, it holds that
$$
\int_{\gamma_1}FdG=\fr{1}{5}ac\left(2g_2\eta_2-3g_3\tau\right),
\int_{\gamma_2}FdG=\fr{1}{5}ac\left(2g_2\eta_1-3g_3\right),
\int_{\gamma}FdG
=0,
$$
where $\eta_1,\eta_2$ are constant\tr{s}, which are determined by the lattice $[1,\tau]$, satisfying $\eta_1=\zeta(z+1)-\zeta(z)$ \tr{and} $\eta_2=\zeta(z+\tau)-\zeta(z)$ for all $z\in\C$.
Hence, 
the period \tr{condition} \eqref{period} \tr{is} equivalent to
\begin{equation}\label{perA}
\left\{
\begin{array}{l}
c\left(2g_2\eta_2-3g_3\tau\right)\in i\R,\\
c\left(2g_2\eta_1-3g_3\right)\in i\R. 
\end{array}
\right.
\end{equation}

\begin{proposition}\label{torus8pi1}
Complete improper affine fronts \tr{of} genus $1$ whose total curvature is $-8\pi$ are constructed by the Weierstrass data
\begin{equation}\label{wd8pitorus}
F=a\wp^\pr+b\wp,
\quad
G=c\wp
\quad
(a>0, c\neq0),
\end{equation}
defined on $\C/[1,\tau] \setminus \{[0]\} $ and satisfying the period condition \eqref{perA}.
\end{proposition}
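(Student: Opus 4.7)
The plan is to combine the Osserman-type constraint with elliptic-function counting on the torus, and then translate the closing condition into \eqref{perA}. First, from \eqref{ossermanineq2} applied to $(g,n)=(1,1)$ we have $\deg\rho=4$; by Fact \ref{Huber2} we take $\Sigma=T_\tau\setminus\{[0]\}$ for some $\tau$ with $\Im\tau>0$, and after an equiaffine transformation \eqref{equiaffine} and a possible swap of $F$ and $G$ we arrange $\rho([0])=\infty$. Then $F$, $G$, and $\rho$ descend to elliptic functions for $[1,\tau]$. The strategy is to enumerate the pole configurations of $\rho$, eliminate impossible ones via $dF=\rho\,dG$ and Fact \ref{wp}, identify the surviving configuration, and compute the periods.

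For the enumeration, I would split on the multiplicity $\ord_0\rho\in\{-4,-3,-2,-1\}$ and on the number of additional poles. The case $\ord_0\rho=-4$ makes $G'$ a nowhere-vanishing holomorphic elliptic function, hence constant, so $G$ is affine and not elliptic. Configurations with $\ord_0\rho\in\{-1,-2\}$ combined with one or two extra poles of $\rho$ force $\ord_0 G\in\{-1,0\}$, which violates Fact \ref{wp}(3) (nonzero residue at $[0]$) or Fact \ref{wp}(2) (constancy). The surviving cases all yield $\ord_0 G=-2$; absorbing an additive constant via \eqref{equiaffine} gives $G=c\wp$ with $c\neq 0$. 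The relation $dF=\rho\,dG$ then forces $\ord_0 F=-3$, and since the space of elliptic functions with a pole of order at most three at $[0]$ is spanned by $1,\wp,\wp'$, we obtain $F=a\wp'+b\wp$, with $a>0$ after a further equiaffine adjustment.

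A second filter eliminates subcases with fewer than three additional poles of $\rho$. Since $G'=c\wp'$ has exactly the three half-periods $\omega_1,\omega_2,\omega_3$ as simple zeros, any pole of $\rho$ other than $[0]$ must sit at a half-period and its multiplicity must match the vanishing order of $G'$ there, so only simple poles are possible. The alternative one- or two-extra-pole configurations then fail the Abel--Jacobi condition $\sum(\text{zeros})\equiv\sum(\text{poles})\pmod{[1,\tau]}$ applied to $G'$; for instance, the hypothetical single extra pole $p$ in Case (II-a) would satisfy $3p\equiv 0\pmod{[1,\tau]}$, which no half-period does. Thus $\rho$ must have simple poles at all three half-periods together with a triple pole at $[0]$.

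The last step is the period condition. Writing $F\,dG=c(a\wp'+b\wp)\wp'\,dz$, the term $b\wp\wp'$ is the derivative of $\tfrac{b}{2}\wp^2$ and contributes no periods. For the remaining $a(\wp')^2$ term, one uses $(\wp')^2=4\wp^3-g_2\wp-g_3$ together with $\wp''=6\wp^2-\tfrac12 g_2$ and $\zeta'=-\wp$ to express an antiderivative as a combination of $\wp'''$, $g_2\zeta$, and $g_3 z$. Integrating along cycles $\gamma_1,\gamma_2$ generating $H_1(T_\tau\setminus\{[0]\})$ whose loops avoid $[0]$ yields $\tfrac15 ac(2g_2\eta_2-3g_3\tau)$ and $\tfrac15 ac(2g_2\eta_1-3g_3)$ via the defining relations $\zeta(z+1)-\zeta(z)=\eta_1$ and $\zeta(z+\tau)-\zeta(z)=\eta_2$; the small loop around $[0]$ has no residue. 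Imposing $\Re=0$ on each cycle integral yields exactly \eqref{perA}. The main technical hurdle is this elliptic-function bookkeeping, especially identifying the antiderivative of $(\wp')^2$ and ruling out non-half-period poles via Abel--Jacobi.
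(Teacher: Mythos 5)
Your proposal is correct and follows essentially the same route as the paper: the same enumeration of pole configurations of $\rho$ on $T_\tau$, elimination via the residue/constancy properties of elliptic functions and the fact that $G'=c\wp'$ has only simple zeros at the half-periods (including the $3p\equiv 0$ argument for the triple-pole subcase), leading to $F=a\wp'+b\wp$, $G=c\wp$, and then the identical antiderivative $\tfrac{a}{30}\wp'''+\tfrac{2}{5}ag_2\zeta-\tfrac{3}{5}ag_3z+\tfrac{b}{2}\wp^2$ for the period computation yielding \eqref{perA}. The only cosmetic difference is that you organize the case analysis as two successive ``filters'' rather than the paper's explicit cases (I)--(IV), and your blanket claim in the first filter is slightly overstated (the $(\ord_0\rho,\ord_p\rho)=(-1,-3)$ configuration gives $\ord_0 G=-2$, not $-1$ or $0$), but your second filter correctly disposes of those survivors.
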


\begin{remark}\label{doesnotshow}
\trr{Note} that Proposition \ref{torus8pi1} still does not show the existence of the surface because we need to determine the modulus $\tau$ of the torus and choose $c\in\C\setminus\{0\}$ that \trr{satisfies} the period condition \eqref{perA}. 
If $\tau=i$ (i.e., $T_\tau$ is \too{the} square torus and this case corresponds with the Chen--Gackstatter 
\trr{surface for} minimal surface case (\trr{\cite{CG_82}})) or $\tau=e^{(2\pi i)/3}$ (i.e., $T_\tau$ is \too{the} \tr{rhombic torus with $60^\circ$-$120^\circ$ angles}), then one can observe that these cases are impossible.
In fact, when $\tau=i$, it holds that $g_2>0, g_3=0$ and $\eta_1=-i \pi$ (\cite[Section 18]{daenkansu}) and then \eqref{perA} yields $c=0$. 
When $\tau=e^{(2\pi i)/3}$, it holds that $g_2=0, g_3>0$ and $\eta_1=2\pi/\rt{3}$.
Then, \eqref{perA} implies $c=0$. 
\end{remark}

From now on, we will show \tr{the} existence of the surface in the special case where $\tau=e^{i\al}\ (\al\in(0,\pi))$ in Proposition \ref{torus8pi1}.
If the period condition \eqref{perA} holds, then one can see that
$$
\Im\left((\overline{2g_2\eta_1-3g_3})(2g_2\eta_2-3g_3\tau)\right)=0.
$$
Since the \trr{invariants} of the $\wp$-function and the $\zeta$-function, namely $g_2, g_3, \eta_1,$ and $\eta_2$ are continuous functions \tr{with respect to} $\tau$, we put $g_2=g_2(\tau), g_3=g_3(\tau), \eta_1=\eta_1(\tau)$, and $ \eta_2=\eta_2(\tau)$. We then set 
\begin{equation}\label{periodfunc1}
P(\al)\coloneqq \Im \left(\overline{p_1(\al)}p_2(\al)\right),
\end{equation}
where $p_1(\al)\coloneqq 2g_2(e^{i\al})\eta_1(e^{i\al})-3g_3(e^{i\al}),\ 
p_2(\al)\coloneqq 2g_2(e^{i\al})\eta_2(e^{i\al})-3g_3(e^{i\al})e^{i\al}.$

\begin{theorem}\label{torus8pi2}
There exists $\al_0\in(\pi/3,\pi/2)$ such that $P(\al_0)=0$. In particular, there exists a complete improper affine front $\psi:\C/[1,e^{i\al_0}] \setminus\{[0]\}\to \R^3$ of genus $1$ whose total curvature is $-8\pi$ (Figure \ref{-8pig1}).
\end{theorem}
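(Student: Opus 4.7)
The plan is to apply the intermediate value theorem to $P$ on $[\pi/3,\pi/2]$. The endpoints $\al=\pi/3$ and $\al=\pi/2$ correspond respectively to the hexagonal torus $\C/[1,e^{i\pi/3}]$ (which equals $\C/[1,e^{2\pi i/3}]$ since $e^{i\pi/3}=1+e^{2\pi i/3}$) and the square torus $\C/[1,i]$; the extra discrete automorphisms of these lattices make the Weierstrass invariants explicit, while in between they vary continuously. Since $g_2(\tau),g_3(\tau),\eta_1(\tau),\eta_2(\tau)$ are continuous (in fact holomorphic) on the upper half plane, $P(\al)$ is continuous on $(0,\pi)$.

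At $\al=\pi/3$, the hexagonal rotation $z\mapsto e^{2\pi i/3}z$ preserves the lattice and classically forces $g_2=0$ and $g_3>0$. Therefore $p_1(\pi/3)=-3g_3$ and $p_2(\pi/3)=-3g_3\,e^{i\pi/3}$, so
\begin{equation*}
P(\pi/3)=\Im\bigl(9g_3^2\,e^{i\pi/3}\bigr)=\fr{9\rt 3}{2}\,g_3^2>0.
\end{equation*}
At $\al=\pi/2$, the rotation $z\mapsto iz$ gives $g_3=0$, $g_2>0$, and the functional equation $\zeta(iz)=-i\zeta(z)$ combined with the Legendre relation $\eta_1\tau-\eta_2=2\pi i$ yields $\eta_2=-i\eta_1$ with $\eta_1\neq0$. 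Hence $p_1(\pi/2)=2g_2\eta_1$ and $p_2(\pi/2)=-2ig_2\eta_1$, so
\begin{equation*}
P(\pi/2)=4g_2^2\,\Im(\bbar{\eta_1}\eta_2)=-4g_2^2|\eta_1|^2<0.
\end{equation*}

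By the intermediate value theorem there exists $\al_0\in(\pi/3,\pi/2)$ with $P(\al_0)=0$, i.e.\ $\bbar{p_1(\al_0)}\,p_2(\al_0)\in\R$. This is precisely the condition that $p_1(\al_0)$ and $p_2(\al_0)$ are linearly dependent over $\R$; in every subcase (both $p_j$ zero, only one zero, or proportional by a real scalar) one readily exhibits $c\in\C\setminus\{0\}$ with $c\,p_1(\al_0),\,c\,p_2(\al_0)\in i\R$, so the period condition \eqref{perA} is satisfied with a nonzero $c$. Proposition \ref{torus8pi1} then produces the desired complete improper affine front on $\C/[1,e^{i\al_0}]\setminus\{[0]\}$ of genus $1$ with total curvature $-8\pi$. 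The main technical step is the two endpoint sign computations; these rely on the hexagonal and square lattice symmetries of $\wp$ and $\zeta$ together with Legendre's relation, but the key reductions $g_2=0$ (hexagonal) and $g_3=0,\eta_2=-i\eta_1$ (square) keep the arithmetic compact.
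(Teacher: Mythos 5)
Your proposal is correct and follows essentially the same route as the paper: the intermediate value theorem on $[\pi/3,\pi/2]$ with the sign computations $P(\pi/3)=\tfrac{9\sqrt{3}}{2}g_3^2>0$ and $P(\pi/2)=-4\pi^2 g_2^2<0$, followed by choosing a nonzero $c$ with $cp_1,cp_2\in i\R$. The only cosmetic difference is that you derive the special values ($g_2=0$ at the hexagonal lattice; $g_3=0$, $\eta_2=-i\eta_1$ at the square lattice) from lattice symmetries and the Legendre relation, whereas the paper cites them from \cite{daenkansu}, and you absorb the case $p_1=p_2=0$ directly rather than excluding it via Legendre as the paper does.
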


\begin{proof}
\cite[Section 18]{daenkansu} shows that the concrete values of $g_2, g_3, \eta_1$ and $\eta_2$ are
$$
g_2\left(\tr{e^{\fr{\pi}{3}i}}\right)=0,\  g_3\left(\tr{e^{\fr{\pi}{3}i}}\right)>0,\  
\eta_1\left(\tr{e^{\fr{\pi}{3}i}}\right)=\fr{2\pi}{\rt{3}}, \ 
\eta_2\left(\tr{e^{\fr{\pi}{3}i}}\right)= \fr{2\pi }{\rt{3}}e^{-\fr{i \pi }{3}},
$$
$$
g_2(i)>0,\  g_3(i)=0, \ \eta_1(i)=\pi, \ \eta_2(i)= -i\pi.
$$
Then, direct computations give that 
$P(\pi/3)=(9\rt{3}g_3^2)/2>0,\ P(\pi/2)=-4g_2^2\pi^2<0.$
Since the function $P(\al)$ is continuous on $(0, \pi)$, the \trr{intermediate} value theorem yields that there exists $\al_0\in(\pi/3,\pi/2)$ such that $P(\al_0)=0$.

Here, either $p_1(\al)$ or $p_2(\al)$ does not vanish for any $\al\in(0,\pi)$. In fact, if $p_1(\al)=p_2(\al)=0$ for some $\al$, then it holds that 
$$
2g_2(e^{i\al})(\eta_1(e^{i\al})e^{i\al}-\eta_2(e^{i\al}))=0.
$$
By the Legendre's identity $\eta_1(e^{i\al})e^{i\al}-\eta_2(e^{i\al})=2\pi i$, one can observe that $g_2(e^{i\al})=0$ and then the torus is \too{the} \tr{rhombic torus with $60^\circ$-$120^\circ$ angles}. From Remark \ref{doesnotshow}, it is impossible.

Thus, \tr{if} we choose \tr{the} complex number $c$ \tr{as} in \eqref{wd8pitorus} \tr{either}
$$
c=i\overline{p_1(\al_0)}\quad\tr{\text{or}}\quad c=i\overline{p_2(\al_0)}\tr{,} 
$$
\tr{whichever is non-zero,} 
then the period \trr{condition} \eqref{perA} is satisfied. Therefore, we complete the proof.
\end{proof}

\begin{figure}[h]
\begin{center}
\begin{tabular}{c@{\hspace{3.6cm}}c}
\includegraphics[width=40mm]{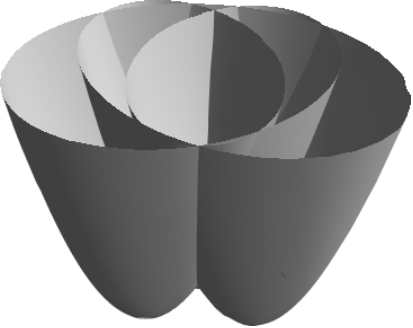}&
\includegraphics[width=40mm]{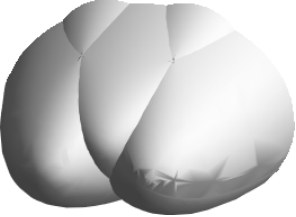}\\
\end{tabular}
\end{center}
\caption{Complete improper affine front of genus $1$ with total curvature $-8\pi$ when $c=i\overline{p_1(\al_0)}$. The values of $\al_0$ and $c$ can be estimated as 
$\al_0\approx 1.37048$, $c=i\overline{p_1(\al_0)}\approx 1265.89 + 370.33i$ by using the Mathematica software.}\label{-8pig1}
\end{figure}

Theorem \ref{torus8pi2} shows that there is a complete improper affine front with the maximum total curvature and positive genus.

\begin{remark}\label{notunique}
Now we consider \tr{the} function 
\begin{equation}
\tilde{P}(\tau)\coloneqq 
\Im\left(\overline{\tilde{p_1}(\tau)}\tilde{p_2}(\tau)\right),
\end{equation}
where 
$\tilde{p_1}(\tau)\coloneqq 2g_2(\tau)\eta_1(\tau)-3g_3(\tau)$ and    
$\tilde{p_2}(\tau)\coloneqq 2g_2(\tau)\eta_2(\tau)-3g_3(\tau)\tau$ 
are  \tg{smooth functions of $
\tau$} defined on the upper half plane $\H\coloneqq\{\tau\in\C\  ;\  \Im\tau>0\}$. 
Theorem \ref{torus8pi2} shows that 
$\tilde{P}(e^{i\al_0})=P(\al_0)=0$. 
On the other hand, the invariants $g_2(\tau), g_3(\tau), \eta_1(\tau),$ and 
$\eta_2(\tau)$ have an expression by the Weierstrass $\theta$-function. \tr{Mathematica software} computes 
$$
\left.\fr{d \tilde{P}}{d\al}(e^{i \al})\right|_{\al=\al_0}\approx -7.74116\times10^6 \neq0.
$$ 
Thus, from the implicit function theorem, there exists an interval $I\ (\ni 0)$ and a smooth curve 
$\phi\col I \to \C$ such that 
$$
\phi(0)=e^{i\al_0}, \quad \tilde{P}(\phi(t))=0\ (t\in I).
$$ 
Hence, when we set $W\coloneqq\left\{\tau=\phi(t)\in\H ; \tilde{P}(\phi(t))=0\ (t\in I)\right\}$ and choose \tr{$c$} \tr{as either}
$i \overline{\tilde{p_1}(\tau)}$ \tr{or} $i\overline{\tilde{p_2}(\tau)}$, \tr{whichever is non-zero}, for each $\tau\in W$, the period condition \eqref{perA} holds. Therefore, it implies the existence of \tr{a} one parameter family with respect to the modulus $\tau$ of complete improper affine fronts of genus $1$ and the total curvature $-8\pi$.

\tg{{\it The complete classification is an open problem in the genus $1$ case. }}
\end{remark}

Finally, we give a new example of a complete improper affine front of genus $1$ (for known examples of genus 1 whose total curvature is $-12\pi$, see \cite[Section 4, No.6]{IA-map}).
\begin{example}
Let $\Sigma=\C/(\Z\oplus i\Z)\setminus\{[0]\}$ be the square torus minus one point and define $F,G$ by
\begin{equation}
F=\wp^{\pr\pr}+\fr{5g_2}{7\pi}\wp,
\quad
G=\wp^{\pr}.
\end{equation}  
One can verify that these $F,G$ satisfy the period condition \eqref{period}. Therefore, $(F,G)$ \tr{gives} a complete improper affine front  $\psi\col \Sigma\to\R^3$ of genus $1$ with the total curvature $-10\pi$ (Figure \ref{-10pi}). 
\end{example}

\begin{figure}[h]
\begin{center}
\begin{tabular}{c@{\hspace{4cm}}c}
\includegraphics[width=37mm]{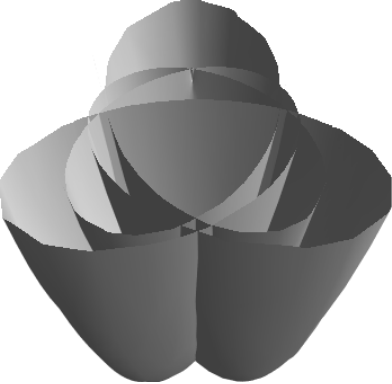}&
\includegraphics[width=32mm]{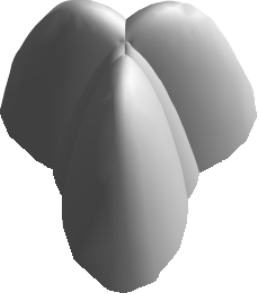}\\
\end{tabular}
\end{center}
\caption{Complete improper affine front of genus $1$ with total curvature $-10\pi$}\label{-10pi}
\end{figure}

\begin{figure}
\begin{center}
\begin{tabular}{c@{\hspace{0.5cm}}c@{\hspace{0.5cm}}c@{\hspace{0.5cm}}c}
\includegraphics[width=26mm]{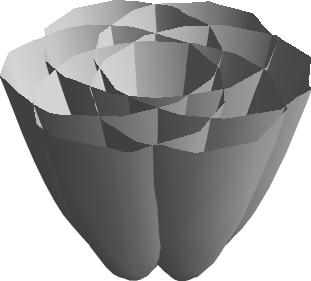}&
\includegraphics[width=26mm]{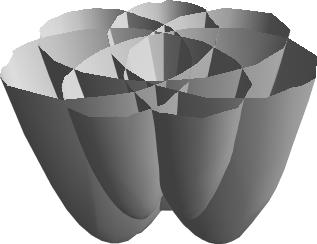}&
\includegraphics[width=26mm]{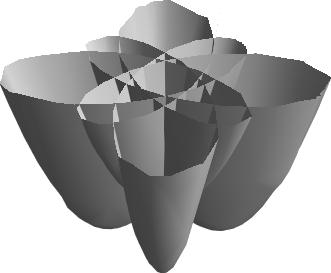}&
\includegraphics[width=26mm]{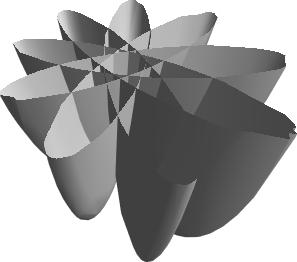}\\
\eqref{801_no1} & \eqref{801_no2} & \eqref{801_no3} & \eqref{801_no4} 
\end{tabular}
\end{center}
\begin{center}
\begin{tabular}{c@{\hspace{1.5cm}}c@{\hspace{1.5cm}}c}
\includegraphics[width=30mm]{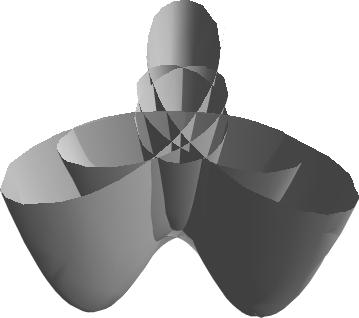}&
\includegraphics[width=30mm]{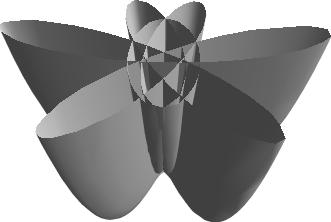}&
\includegraphics[width=20mm]{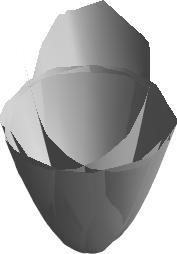}\\
\eqref{801_no5} & \eqref{801_no6} & \eqref{801_no7}
\end{tabular}
\tg{\caption{Complete improper affine front with total curvature $-8\pi$, genus $0$, and one end}}\label{tc8pi01}
\end{center}
\end{figure}

\begin{figure}[h]
\begin{center}
\begin{tabular}{c@{\hspace{0.5cm}}c@{\hspace{0.5cm}}c@{\hspace{0.5cm}}c@{\hspace{0.5cm}}c}
\includegraphics[width=15mm]{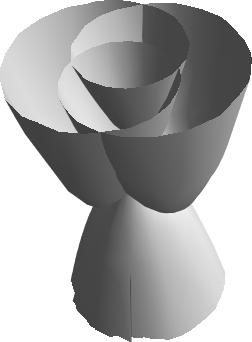}&
\includegraphics[width=12mm]{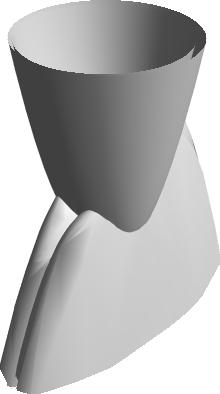}&
\includegraphics[width=20mm]{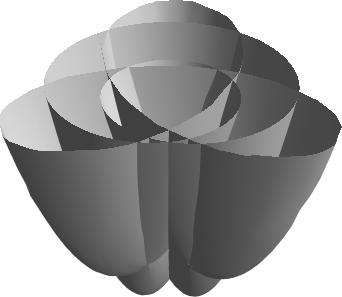}&
\includegraphics[height=18mm]{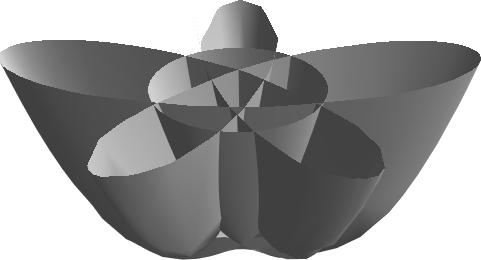}&
\includegraphics[width=15mm]{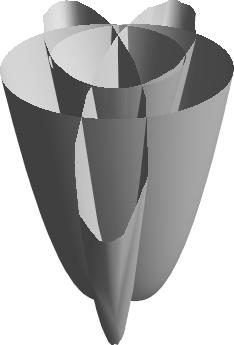}\\
\eqref{802_no1} & \eqref{802_no2} & \eqref{802_no3} & \eqref{802_no4} &\eqref{802_no5}  \end{tabular}
\begin{tabular}{c@{\hspace{0.5cm}}c@{\hspace{0.5cm}}c@{\hspace{0.5cm}}c@{\hspace{0.5cm}}c}
\includegraphics[width=15mm]{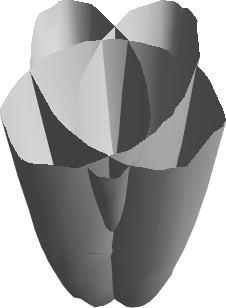}&
\includegraphics[width=18mm]{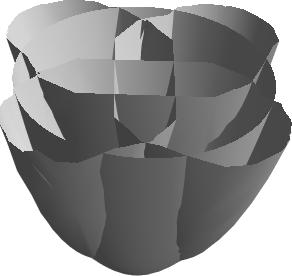}&
\includegraphics[width=25mm]{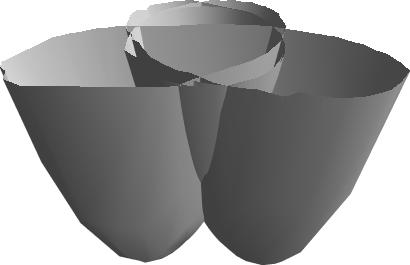}&
\includegraphics[width=25mm]{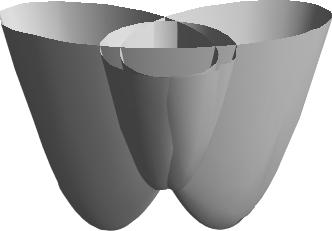}&
\includegraphics[width=20mm]{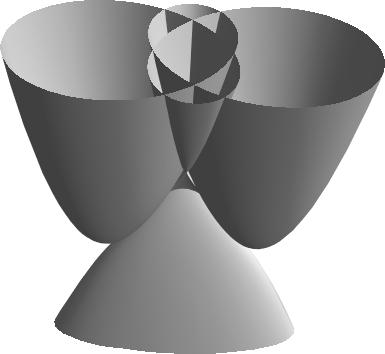}\\
\eqref{802_no6} & \eqref{802_no7} & \eqref{802_no8} & \eqref{802_no9} & \eqref{802_no10}    
\end{tabular}
\begin{tabular}{c@{\hspace{0.5cm}}c@{\hspace{0.5cm}}c@{\hspace{0.5cm}}c@{\hspace{0.5cm}}c}
\includegraphics[width=18mm]{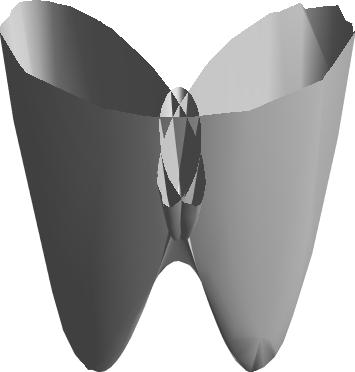}&
\includegraphics[width=26mm]{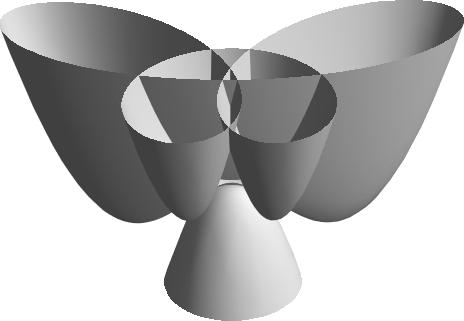}&
\includegraphics[width=20mm]{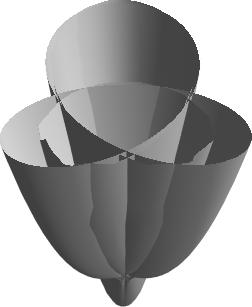}&
\includegraphics[width=20mm]{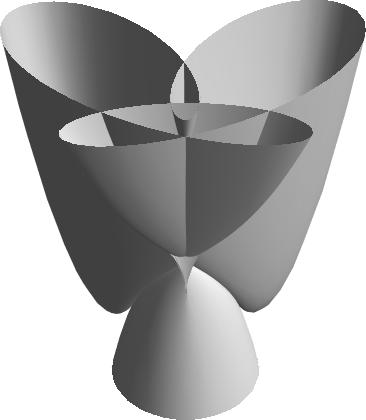}&
\includegraphics[width=16mm]{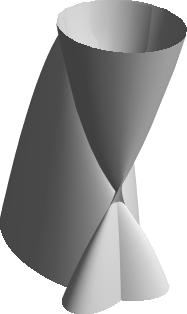}\\
\eqref{802_no11} & \eqref{802_no12} & \eqref{802_no13} & \eqref{802_no14} & \eqref{802_no15}
\end{tabular}
\tg{\caption{Complete improper affine front with total curvature $-8\pi$, genus $0$, and two ends}}\label{tc8pi02}
\end{center}
\end{figure}


\newcommand{\etalchar}[1]{$^{#1}$}

\end{document}